\DeclareMathOperator{\Hom}{Hom}
\newcommand{\cE}{\mathcal{E}}
\newcommand{\cF}{\mathcal{F}}
\newcommand{\cO}{\mathcal{O}}
\newcommand{\cW}{\mathcal{W}}
\newcommand{\CC}{\mathbf{C}}
\newcommand{\QQ}{\mathbf{Q}}
\newcommand{\ZZ}{\mathbf{Z}}
\newcommand{\bfk}{\mathbf{k}}
\newcommand{\Qp}{{\QQ_p}}
\newcommand{\QQbar}{\overline{\QQ}}
\newcommand{\Qpbar}{\overline{\QQ}_p}
\newcommand{\Zp}{\ZZ_p}
\newcommand{\tU}{\widetilde{U}}
\newcommand{\into}{\hookrightarrow}
\renewcommand{\ge}{\geqslant}
\renewcommand{\le}{\leqslant}
\renewcommand{\ss}{\spadesuit}
\newcommand{\ds}{\diamondsuit}
\numberwithin{equation}{subsection}
\theoremstyle{plain}
\newtheorem{theorem}{Theorem}[section]
\newtheorem{lemma}[theorem]{Lemma}
\newtheorem{proposition}[theorem]{Proposition}
\newtheorem{corollary}[theorem]{Corollary}
\theoremstyle{definition}
\newtheorem{definition}[theorem]{Definition}
\theoremstyle{remark}
\newtheorem{remark}[theorem]{Remark}
\title{A note on $p$-adic Rankin--Selberg $L$-functions}
\author{David Loeffler}
\address{Mathematics Institute, Zeeman Building, University of Warwick, Coventry CV4 7AL, UK}
\urladdr{\href{http://orcid.org/0000-0001-9069-1877}{0000-0001-9069-1877}}
\email{d.a.loeffler@warwick.ac.uk}
\thanks{The author's research was supported by a Royal Society University Research Fellowship.}
\subjclass[2010]{11F85, 11F67, 11G40, 14G35}
\begin{document}
 
 \begin{abstract}
  We prove an interpolation formula for the values of certain $p$-adic Rankin--Selberg $L$-functions associated to non-ordinary modular forms.
 \end{abstract} 
 
 \maketitle
  
 \section{Introduction}
 
  \subsection{Background} Let $f_1$, $f_2$ be two modular eigenforms, of weights $k_1 > k_2$. Then there is an associated Rankin--Selberg $L$-function $L(f_1, f_2, s)$, which is defined by a Dirichlet series $\sum c_n n^{-s}$ such that for $\ell$ prime we have $c_\ell = a_\ell(f) a_\ell(g)$.
  
  If $p$ is prime, and $f_1$ is \emph{ordinary} at $p$, then a well-known construction due to Panchishkin \cite{panchishkin82} and (independently) Hida \cite{hida85} gives rise to a $p$-adic Rankin--Selberg $L$-function $L_p(f_1, f_2, \sigma)$. This is a $p$-adic analytic function on the space $\cW$ of continuous characters of $\Zp^\times$, with the property that if $\sigma$ is a locally algebraic character $z \mapsto z^j \chi(z)$, with $j$ in the critical range $k_2 \le j \le k_1 - 1$ and $\chi$ of finite order, then
  \[ L_p(f_1, f_2, \sigma) = (\star) \cdot L(f_1, f_2, \chi^{-1}, j) \]
  where $(\star)$ is an explicit factor. Hida subsequently showed in \cite{hida88} that if $f_2$ is also ordinary, then $L_p(f_1, f_2, \sigma)$ extends to a 3-variable analytic function in which the forms $f_1$ and $f_2$ are allowed to vary in Hida families $\mathcal{F}_1, \mathcal{F}_2$. The existence of this $p$-adic $L$-function plays a major role in several recent works on arithmetic of Rankin--Selberg $L$-functions, in particular appearing in the explicit reciprocity law for the Euler system of Beilinson--Flach elements \cite{BDR-BeilinsonFlach, BDR-BeilinsonFlach2, KLZ17} (which is in turn crucial for several other recent works such as \cite{buyukboduklei16, castella-heights-BF, Dasgupta-factorization}).
  
  It is natural to seek a generalisation of this construction to non-ordinary eigenforms, and variation in Coleman families. For fixed $f_1$ and $f_2$ of level prime to $p$ and satisfying a suitable ``small slope'' hypothesis, such a construction was carried out by My \cite{my91}, but allowing variation in families has proved to be substantially more difficult. A construction of a 3-variable $p$-adic $L$-function with the expected interpolating property was initially announced in \cite{Urban-nearly-overconvergent}, but an error in this construction was subsequently found, and (to the best of the this author's knowledge) this has not been fully resolved at the present time\footnote{See note on next page.}.
  
  In the author's recent work with Zerbes \cite[Theorem 9.3.2]{loefflerzerbes16}, it was shown that there exists a 3-variable $p$-adic $L$-function with the expected interpolating property at \emph{crystalline} points (i.e. where $f_1$ and $f_2$ are $p$-stabilisations of eigenforms of level prime to $p$, and $\chi$ is trivial). Moreover, this $p$-adic $L$-function is related by an explicit reciprocity law to the Euler system of Beilinson--Flach elements, as in the ordinary case. Unfortunately, we were not able to establish unconditionally that the $p$-adic $L$-function thus constructed also had the expected interpolation property at non-crystalline points, so our results fell short of giving a full proof of the results announced in \cite{Urban-nearly-overconvergent}.
  
  This gap in the published literature has become increasingly troublesome, since several papers have now been published which assume this stronger interpolation property; these include several papers making major contributions to famous open problems, such as the Iwasawa main conjecture for supersingular elliptic curves \cite{buyukboduklei16b, wan15} and the Birch--Swinnerton-Dyer conjecture in analytic rank 1 \cite{jetchevskinnerwan}.
  
  \subsection{Aims of this paper} The purpose of this note is to give a proof of an interpolation formula for the $L$-function of \cite{loefflerzerbes16} at all critical points, crystalline or otherwise, in a certain special case. The assumption we make is that the Coleman family $\cF_2$ is ordinary, although $\cF_1$ may not be; this suffices for the applications in the papers cited above (all of which correspond to the case where $\cF_2$ is an ordinary family of CM-type). The present author is cautiously optimistic that it might be possible to push these methods further in order to give a full proof of the results announced in \cite{Urban-nearly-overconvergent}, but believes it is in the interests of the research community to release this partial proof without further delay, in order to place the already-published papers conditional on this result on a firm footing.
  
  Our strategy will be to relate the 3-variable ``geometric'' $p$-adic $L$-function, constructed using Beilinson--Flach elements, with two families of ``analytic'' $p$-adic $L$-functions. These 2-variable functions, denoted here by superscripts $\spadesuit$ and $\diamondsuit$, are defined over 2-variable slices of the full 3-variable parameter space. Their construction involves nearly-overconvergent forms of a fixed degree, and therefore can be carried out using the methods of \cite{Urban-nearly-overconvergent} without the technical issues which arise when the degree of near-overconvergence is allowed to vary. The assumption that the second Coleman family $\cF_2$ is ordinary implies that it is defined over an entire component of weight space; this gives sufficient ``room'' to move along $\spadesuit$ and $\diamondsuit$ families from an arbitrary critical point to a crystalline one at which the results of \cite{KLZ17} can be applied.
  
  A secondary aim of this paper is to make the interpolation formula for the resulting $p$-adic $L$-function completely explicit, at least in the most important cases. This calculation is not new, but a precise statement of the formula seems to be difficult to find in the existing references (particularly in the non-crystalline cases); so we have given careful statements in Propositions \ref{prop:interp-formula} and \ref{prop:interp-formula2}, and an outline sketch of their proofs in an appendix.
  
  \subsubsection*{Note added during review}
  
   Since the initial version of this paper was released, the author has learned of the article \cite{AIU} in preparation, which circumvents the problems with \cite{Urban-nearly-overconvergent} via a new approach to nearly-overconvergent modular forms (as sections of a certain sheaf of Banach modules). This should in due course lead to a proof of an analogue of Theorem 6.3 of the present paper for arbitrary pairs of Coleman families, without the restriction imposed here that $\cF_2$ be ordinary. However, the author believes that there is still value in making this note available, since the preprint \cite{AIU} has not yet been published, and the preliminary version of \cite{AIU} seen by the author only considers families over the ``centre'' of weight space and thus does not cover most non-crystalline classical points.
  
  \subsection*{Acknowledgements} I am grateful to Eric Urban and Xin Wan for helpful comments on the topic of this paper, and to Xin Wan in particular for encouraging me to write it up. Part of the work described in the paper was carried out during a visit to the Institute for Advanced Study in Princeton in the spring of 2016, and I am very grateful to the IAS for their hospitality.
    
 \section{Complex Rankin--Selberg $L$-functions and period integrals}
 
  \subsection{The complex $L$-function}
  
   Let $k, k'$ be positive integers, and $f_1$, $f_2$ two new, normalised cuspidal modular eigenforms of weights $k_1, k_2$ (and some levels $N_1, N_2$). We assume $k_1 \ge k_2$ without loss of generality. 
   
   \begin{definition}
    The (imprimitive) \emph{Rankin--Selberg $L$-function} of $f_1$ and $f_2$ is the Dirichlet series
    \[ 
     L^{\mathrm{imp}}(f_1, f_2, s) 
     = L_{(N_1 N_2)}(\varepsilon_1 \varepsilon_2, 2s +2 - k_1 - k_2) \cdot \sum_{n \ge 1} a_n(f_1) a_n(f_2) n^{-s}. 
    \]
    More generally, if $\chi$ is a Dirichlet character of conductor $N_\chi$ we set
    \[ 
     L^{\mathrm{imp}}(f_1, f_2, \chi, s) = L_{(N_1 N_2 N_\chi)}(\varepsilon_1 \varepsilon_2 \chi^2, 2s + 2 - k_1 - k_2) \cdot \sum_{\substack{n \ge 1 \\ (n, N_\chi) = 1}} a_n(f_1) a_n(f_2) \chi(n) n^{-s}.
    \]
   \end{definition}

   This $L$-function has an Euler product, in which the local factor for a primes $\ell \nmid N_1 N_2 N_\chi$ is given by $P_\ell(f_1, f_2, \chi(\ell) \ell^{-s})^{-1}$, where
   \[ 
    P_\ell(f_1, f_2, X) = (1 - \alpha_1 \alpha_2  X)(1 - \alpha_1 \beta_2 X) (1 - \beta_1 \alpha_2  X)(1-\beta_1 \beta_2  X).
   \]
   Here $\alpha_1, \beta_1$ denote the roots of the polynomial $X^2 - a_\ell(f_1) X + \ell^{k-1} \varepsilon_1(\ell)$, and similarly for $\alpha_2, \beta_2$.
  
   \begin{remark}
    We refer to this $L$-function as an ``imprimitive'' $L$-function since it differs by finitely many Euler factors from the $L$-function of the motive associated to $f_1 \otimes f_2 \otimes \chi$ (the ``primitive'' Rankin--Selberg $L$-function). The only primes $\ell$ at which the local Euler factors can differ are those $\ell$ dividing at least two of the three integers $N_1, N_2, N_\chi$; so if these are pairwise coprime, then the primitive and imprimitive $L$-functions coincide.
   \end{remark}
   
   It is well known that $L^{\mathrm{imp}}(f_1, f_2, \chi, s)$ has meromorphic continuation to all $s \in \CC$. It is entire unless $k_1 = k_2$ and $f_2 = f_1 \otimes \varepsilon_1^{-1} \chi^{-1}$, in which case there is a simple pole at $s = k_1$. The critical values are those in the interval $k_2 \le s \le k_1 - 1$.

  \subsection{A Petersson product formula}
  
   Now let $p$ be prime; and choose an embedding $\QQbar \into \Qpbar$.
   
  \begin{definition}
   A \emph{locally algebraic character} of $\Zp^\times$ is a homomorphism $\Zp^\times \to \Qpbar^\times$ of the form $x \mapsto x^n \chi(x)$, where $n \in \ZZ$ and $\chi$ is a finite-order character (equivalently, a Dirichlet character of $p$-power conductor). We denote this character by ``$n + \chi$''.
  \end{definition}
  
  \begin{definition}
   By a \emph{$p$-stabilised newform} of tame level $N$, where $N$ is an integer coprime to $p$, we shall mean a normalised cuspidal Hecke eigenform of level $\Gamma_1(Np^r)$, for some $r \ge 1$, such that either $f$ is a newform, or $f$ is a $U_p$-eigenform in the two-dimensional space of oldforms associated to some newform of level $N$. In the latter case, we say $f$ is \emph{crystalline}.
   
   We define the \emph{weight-character} of $f$ to be the locally-algebraic character $\kappa$ of $\Zp^\times$ defined by $\kappa = k + \varepsilon_p$, where $k$ is the weight of $f$ and $\varepsilon_p$ is the $p$-part of the Nebentypus character of $f$.
   
   If $f$ is a $p$-stabilised newform, we denote by $f^c$ the unique $p$-stabilised newform with the same weight-character as $f$ satisfying
   \[ a_n(f^c) = \varepsilon_{N, f}(n)^{-1} a_n(f), \]
   where $\varepsilon_{N, f}$ is the prime-to-$p$ part of the Nebentypus of $f$, for all $(n, N) = 1$ (even if $p \mid n$).
  \end{definition}
  
  \begin{remark}
   Note that if $f$ is a $p$-stabilised newform whose nebentypus is trivial at $p$, then $f^c$ has the same Hecke eigenvalues away from $p$ as the conjugate form $f^*$ defined by $f^*(\tau) = \overline{f(-\bar \tau)}$. However, $f^c$ and $f^*$ do not generally have the same $U_p$-eigenvalue; in particular $f^c$ is ordinary if $f$ is (which is not true of $f^*$). On the other hand, if $f$ has non-trivial character at $p$, then the Hecke eigenvalues of $f^c$ and $f^*$ away from $p$ are different.
  \end{remark}
  
  Let $f_1, f_2$ be $p$-stabilised newforms of some tame levels $N_1, N_2$, and let $\kappa_1 = k_1 + \varepsilon_{1, p}, \kappa_2 = k_2 + \varepsilon_{2, p}$ be their weight-characters. We choose an integer $N$ divisible by both $N_1$ and $N_2$, and with the same prime factors as $N_1 N_2$. Given $\sigma = j + \chi$ a locally algebraic character, we consider the formal power series
  \[ 
   \mathcal{E}_N(\kappa_1, \kappa_2, \sigma) \coloneqq \sum_{\substack{n \ge 1 \\ p \nmid n}} \left( \sum_{d \mid n} d^{\sigma - \kappa_2} \left(\tfrac{n}{d}\right)^{\kappa_1 - \sigma - 1} \left[ e^{2\pi i d / N} + (-1)^{\kappa_1 - \kappa_2} e^{-2\pi i d / N}\right] \right) q^n.
  \]
  
  \begin{lemma}
   If $1 \le k_2 \le j \le k_1-1$, then $\mathcal{E}_N(\kappa_1, \kappa_2, \sigma)$ is the $q$-expansion of a nearly-holomorphic modular form of weight $k_1 - k_2$, level dividing $Np^\infty$, and degree at most $\min(k_1 - 1 - j, j - k_2)$, on which the diamond operators at $p$ act via the character $\varepsilon_{1, p} - \varepsilon_{2, p}$.
  \end{lemma}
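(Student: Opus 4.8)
The plan is to recognize $\mathcal{E}_N(\kappa_1,\kappa_2,\sigma)$ as (up to normalisation) the $q$-expansion of a suitable Eisenstein series, obtained by applying the Maass--Shimura differential operator $\delta$ a controlled number of times to a holomorphic Eisenstein series of smaller weight. Recall that the non-holomorphic Eisenstein series $E_{k,j}$ built from a holomorphic weight-$(k-2t)$ Eisenstein series by applying $\delta^t$ has a $q$-expansion whose coefficients are polynomials of degree $t$ in $1/(4\pi y)$, i.e. a nearly-holomorphic form of weight $k$ and degree $t$. So first I would write $k := k_1 - k_2$ for the target weight, and identify the holomorphic ``seed'': it should be an Eisenstein series of weight $k - 2t$ (for an appropriate $t$) and level dividing $Np^\infty$ attached to the pair of characters governing the two sums over $d \mid n$, with the explicit exponential terms $e^{\pm 2\pi i d/N}$ encoding the choice of level-$N$ cusp data (these are essentially the $q$-expansions of $E_1$-type series at the relevant cusp). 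The factor $(-1)^{\kappa_1 - \kappa_2}$ in front of $e^{-2\pi i d/N}$ is exactly the sign needed to make the combination transform correctly under the relevant Atkin--Lehner/symmetry, and the condition $p \nmid n$ reflects the usual ``$p$-depletion'' (stabilisation) that removes the Euler factor at $p$, which is why the level is only $Np^\infty$ and the diamond operators at $p$ act through $\varepsilon_{1,p} - \varepsilon_{2,p}$.

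Next I would carry out the combinatorial bookkeeping on the inner coefficient $\sum_{d \mid n} d^{\sigma - \kappa_2}(n/d)^{\kappa_1 - \sigma - 1}$. Writing $\sigma = j + \chi$ and $\kappa_i = k_i + \varepsilon_{i,p}$, the exponents become $d^{j - k_2}$ and $(n/d)^{k_1 - 1 - j}$ up to the finite-order twisting characters. The point is that a divisor sum $\sum_{d \mid n} d^{a}(n/d)^{b}$ with $a + b = k - 1 - 2\min(a,b)$... more precisely with $a, b \ge 0$ is (up to elementary manipulation) the $n$-th coefficient of $\delta^{\min(a,b)}$ applied to a holomorphic Eisenstein series of weight $|a - b| + 1$ (or $|a-b|+2$), because $\delta^t$ acting on a weight-$w$ form sends $\sum c_n q^n$ to a form whose leading ``holomorphic part'' has $n$-th coefficient $n^t c_n$ times binomial constants, and iterating the standard formula for $\delta = \frac{1}{2\pi i}\frac{d}{d\tau} + \frac{w}{4\pi y}$ on a $q$-expansion produces exactly such weighted divisor sums. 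Under the hypothesis $k_2 \le j \le k_1 - 1$ we have $a = j - k_2 \ge 0$ and $b = k_1 - 1 - j \ge 0$, with $a + b = k_1 - k_2 - 1 = k - 1$, so $\min(a,b) \le \lfloor (k-1)/2 \rfloor$ and more precisely $\min(a,b) = \min(j - k_2, k_1 - 1 - j)$ — this is exactly the asserted bound on the degree of near-overconvergence. I would make this matching precise by comparing $q$-expansions coefficient-by-coefficient, using that two nearly-holomorphic forms with the same $q$-expansion (in the sense of same polynomial-in-$1/4\pi y$ coefficients) are equal.

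The modularity itself — i.e. that this $q$-expansion really is the expansion of an object transforming correctly under $\Gamma_1(Np^r)$ for suitable $r$ — I would get by exhibiting $\mathcal{E}_N$ explicitly as a ($p$-depletion of a) $\delta^t$-image of a classical holomorphic Eisenstein series whose modularity is standard (e.g. from the theory in Shimura's book or Hida's work), rather than by checking transformation laws by hand; the $p$-depletion $g \mapsto g^{[p]} := g - V_p U_p g$ (removing coefficients with $p \mid n$) preserves near-holomorphy and degree and only enlarges the level at $p$, and its effect on the diamond action at $p$ gives the stated character $\varepsilon_{1,p} - \varepsilon_{2,p}$ once one tracks the nebentypus of the seed Eisenstein series (which carries $\varepsilon_{1,p}$ on one divisor variable and $\varepsilon_{2,p}^{-1}$, equivalently a sign, on the other).

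The main obstacle I anticipate is purely notational/combinatorial: correctly normalising the Maass--Shimura operator and the seed Eisenstein series so that the binomial and $(2\pi i)$-type constants produced by $\delta^t$ match the bare divisor sum $\sum_{d\mid n} d^{\sigma-\kappa_2}(n/d)^{\kappa_1-\sigma-1}$ appearing in the definition, and keeping straight which of the two natural choices of seed weight ($|a-b|+1$ versus $|a-b|+2$, depending on the parity encoded by $(-1)^{\kappa_1-\kappa_2}$) is the right one so that the two exponential terms combine into a genuine Eisenstein series rather than vanishing. The substance — degree bound $= \min(k_1 - 1 - j,\, j - k_2)$, weight $k_1 - k_2$, level $Np^\infty$, diamond character $\varepsilon_{1,p} - \varepsilon_{2,p}$ — all falls out of this identification once the normalisations are pinned down; none of it requires any idea beyond ``$\mathcal{E}_N$ is a $p$-stabilised Shimura--Maass derivative of an explicit holomorphic Eisenstein series.''
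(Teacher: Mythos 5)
Your proposal is correct and is essentially the argument behind the paper's proof, which consists solely of a citation to \cite[\S 5.3]{leiloefflerzerbes14}: there, exactly as you describe, $\mathcal{E}_N$ is identified with the $p$-depletion of a real-analytic (Maass--Shimura derived) Eisenstein series attached to the cusp data $e^{\pm 2\pi i d/N}$, with the degree bound $\min(k_1-1-j,\,j-k_2)$ coming from the two ranges of nearly-holomorphic specialisation related by the functional equation. Your bookkeeping of the exponents $a=j-k_2$, $b=k_1-1-j$, the seed weight $|a-b|+1$, and the nebentypus $\varepsilon_{1,p}-\varepsilon_{2,p}$ all match the cited computation.
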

  
  \begin{proof}
   See \cite[\S 5.3]{leiloefflerzerbes14}.
  \end{proof}
  
  If $\Pi^{\mathrm{hol}}$ denotes Shimura's holomorphic projector, then the cuspidal modular form
  \[ \Pi^{\mathrm{hol}}\left(f_2 \cdot \mathcal{E}_N(\kappa_1, \kappa_2, \sigma)\right) \]
  has level dividing $Np^\infty$, and its weight-character agrees with that of $f_1$ (and thus also of $f_1^c$).
  
  \begin{definition}
   Suppose $f_1$ has finite slope (that is, $a_p(f) \ne 0$). We let $\lambda_{f_1^c}$ denote the unique linear functional on $S_{k_1}(N_1 p^\infty, \bar{\varepsilon}_{1, p})$ which factors through the Hecke eigenspace associated to $f_1^c$, and maps the normalised eigenform $f_1^c$ itself to 1. We extend this to forms of tame level $N$ by composing with the trace map.
  \end{definition}
  
  \begin{definition}
   We set
   \[ I(f_1, f_2, \sigma) = N^{\kappa_1 + \kappa_2 - 2\sigma - 2} \cdot \lambda_{f_1^c}\Big( \Pi^{\mathrm{hol}}\left(f_2 \cdot \mathcal{E}_N(\kappa_1, \kappa_2, \sigma)\right) \Big).\]
  \end{definition}
  
  \begin{theorem}[Rankin--Selberg, Shimura]
   If $1 \le k_2 \le j \le k_1-1$ then we have
   \[ I(f_1, f_2, j + \chi) = (\star) \cdot L^{\mathrm{imp}}(f_1, f_2, \chi^{-1}, j) \]
   where $(\star)$ is an explicitly computable factor.
  \end{theorem}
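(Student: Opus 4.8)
The plan is to run the classical Rankin--Selberg method in the form used by Shimura, treating the $p$-depletion and the various normalisations as bookkeeping that only affects the factor $(\star)$.

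First I would identify $\cE_N(\kappa_1, \kappa_2, \sigma)$, for $\sigma = j + \chi$ in the stated range, with the $q$-expansion of a $p$-depleted Maass--Shimura derivative of a holomorphic Eisenstein series: the nearly-holomorphic form furnished by the Lemma above is $\delta^r$ applied to a holomorphic Eisenstein series $E$ of weight $k_1 - k_2 - 2r$ of suitable level and character, with $r = \min(k_1 - 1 - j,\, j - k_2)$, and with the Fourier coefficients supported on multiples of $p$ removed. This is essentially the computation of \cite[\S 5.3]{leiloefflerzerbes14}, and it reduces the problem to evaluating a single Petersson integral. Next I would use that Shimura's holomorphic projector $\Pi^{\mathrm{hol}}$ is adjoint to the inclusion of holomorphic forms into nearly-holomorphic forms for the Petersson product, and that $\lambda_{f_1^c}$ is, up to the constant forcing $\lambda_{f_1^c}(f_1^c) = 1$, realised by the Petersson product against a fixed holomorphic form spanning the eigenline dual to that of $f_1^c$; this gives
\[ \lambda_{f_1^c}\Big(\Pi^{\mathrm{hol}}\big(f_2 \cdot \cE_N(\kappa_1, \kappa_2, \sigma)\big)\Big) = \lambda_{f_1^c}\big(f_2 \cdot \cE_N(\kappa_1, \kappa_2, \sigma)\big), \]
so it remains to evaluate the right-hand side.

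Writing the latter as a Petersson integral over a fundamental domain and unfolding the Eisenstein series against $\Gamma_\infty \backslash \mathcal{H}$, the unfolded integral becomes a Mellin transform in the imaginary part $y$ of $\tau$ of $\sum_{p \nmid n} a_n(f_1)\, \overline{a_n(f_2)}\, \chi(n)\, e^{-4\pi n y}$, times a polynomial in $1/y$ produced by $\delta^r$; this evaluates to a product of explicit $\Gamma$-factors and powers of $\pi$ times $\sum_{p \nmid n} a_n(f_1)\, a_n(f_2)\, \chi(n)\, n^{-j}$, while the Dirichlet $L$-factor $L_{(N_1 N_2 N_\chi)}(\varepsilon_1 \varepsilon_2 \chi^2, 2j + 2 - k_1 - k_2)$ occurring in $L^{\mathrm{imp}}$ materialises as the normalising (constant-term) factor of $E$. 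Finally I would collect the accumulated constants --- the power of $N$ in the definition of $I$, the factors coming from $\delta^r$ and from holomorphic projection, the $\Gamma$-factors and powers of $2\pi i$, the Petersson norm $\langle f_1^c, f_1^c \rangle$ entering the normalisation of $\lambda_{f_1^c}$, and the finitely many Euler factors at $p$ reintroduced to undo the depletion --- into $(\star)$, and check that the Dirichlet series produced, which differs from $L^{\mathrm{imp}}(f_1, f_2, \chi^{-1}, j)$ only in its Euler factors at $p$, coincides with it once that correction has been absorbed. The appearance of $\chi^{-1}$ rather than $\chi$ comes from the complex conjugation implicit in the Petersson product together with the passage between $f_1$ and $f_1^c$.

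I expect the main obstacle to be the unfolding-and-projection computation in the genuinely nearly-holomorphic case $r > 0$, which requires Shimura's formulae for the action of the Maass--Shimura operators on Petersson products and careful tracking of the resulting combinatorial factors; everything else is elementary bookkeeping of the kind carried out explicitly in the cited references and, for the precise shape of $(\star)$, in the appendix.
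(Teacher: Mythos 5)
Your proposal is correct in outline and follows the same classical Rankin--Selberg strategy that the paper itself relies on: the theorem is cited as a known result, with the explicit constants recorded in Propositions \ref{prop:interp-formula} and \ref{prop:interp-formula2} and the computation only sketched in the appendix. The one place where your route genuinely diverges from the paper's, and where your sketch is loosest, is the unfolding step. The form $\cE_N(\kappa_1,\kappa_2,\sigma)$ is $p$-depleted and, for non-trivial $\chi$, twisted, so it is not literally a sum over $\Gamma_\infty\backslash\Gamma$ and cannot be unfolded as it stands; you acknowledge this only through the phrase ``Euler factors at $p$ reintroduced to undo the depletion''. The appendix instead first recognises the twisted, depleted Eisenstein series as the $\chi$-twist of an honest Eisenstein series $\tilde E$ of level $Np^r$ (Step 2), moves the twist across the Petersson product using the support of the Fourier coefficients of $g_{2r}$, and applies the Atkin--Lehner operator $W_{p^{2r}}$ (Step 3) so as to land on a genuine Eisenstein series $E_{1/Np^{2r}}$ to which the standard unfolding formula applies (Step 4). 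The resulting correction at $p$ is a Gauss-sum factor $G(\chi)^2\left(\tfrac{p^{2j-2}}{\alpha_1^2\alpha_2\beta_2}\right)^r$ rather than a na\"ive Euler factor, and the further factors $\cE(f_1)\,\cE^*(f_1)$ and $(\varepsilon_1(p)/\alpha_1)^{n-1}$ arise from expressing $\lambda_{f_1^c}$ at level $Np^{2r}$ as a Petersson product against $W_{N_1p}(f_{1,\beta})\mid_k\left(\begin{smallmatrix}p^{n-1}&\\&1\end{smallmatrix}\right)$ (Step 1), which your appeal to ``the normalisation of $\lambda_{f_1^c}$'' compresses considerably. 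Since the theorem as stated only asserts that $(\star)$ is explicitly computable, none of this invalidates your argument; but to make the unfolding legitimate, and to recover the precise constants of Propositions \ref{prop:interp-formula} and \ref{prop:interp-formula2}, you would need the twisting-plus-Atkin--Lehner device (or an equivalent decomposition of the depleted Eisenstein series into honest ones).
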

  
  We shall not give the precise form of the factor $(\star)$ in all possible cases, since this rapidly becomes messy, but we shall give a selection of useful cases. First, we treat the case where $f_1$ and $f_2$ are crystalline, hence $p$-stabilisations of forms $f_1^\circ, f_2^\circ$ of levels $N_1, N_2$ coprime to $p$. We write $\alpha_i$ for the $U_p$-eigenvalue of $f_i$, so that $\alpha_i$ is a root of the Hecke polynomial of $f_i^\circ$ at $p$, and $\beta_i$ for the other root of this polynomial. We assume\footnote{This assumption is known to be true if $k_1 = 2$, and is known to follow from the Tate conjecture if $k_1 \ge 3$ \cite{colemanedixhoven98}.} that $\alpha_1 \ne \beta_1$.  
  
  We define certain local Euler factors at $p$, as in \cite{BDR-BeilinsonFlach} and \cite[Theorem 2.7.4]{KLZ17}, by
  \begin{gather*}
   \cE(f_1) = \left( 1 - \frac{\beta_1}{p \alpha_1}\right), \qquad \cE^*(f_1) = \left( 1 - \frac{\beta_1}{\alpha_1}\right),\\
   \cE(f_1, f_2, j + \chi)\! =\!
   \begin{cases}
    \left( 1 - \frac{p^{j-1}}{\alpha_1 \alpha_2}\right) \left( 1 - \frac{p^{j-1}}{\alpha_1 \beta_2}\right) \left( 1 - \frac{\beta_1 \alpha_2}{p^j}\right) \left( 1 - \frac{\beta_1 \beta_2}{p^j}\right) &\text{if $\chi=1$,}\\[2mm]
    G(\chi)^2 \cdot \left( \frac{ p^{2s-2} }{\alpha_1^2 \alpha_2 \beta_2}\right)^r &\text{ if $\chi$ has conductor $p^r > 1$.}
   \end{cases}
  \end{gather*}
  Here $G(\chi)$ is the Gauss sum $\sum_{a \in (\ZZ/p^r\ZZ)^\times} \chi(a) e^{2\pi i a / p^r}$.
  
  \begin{proposition}
   \label{prop:interp-formula}
   In the above setting, we have
   \[ 
    I(f_1, f_2, j+\chi) = \frac{\cE(f_1, f_2, j)}{\cE(f_1) \cE^*(f_1)}
    \cdot
    \frac{(j-1)!(j-k_2)! i^{k_1 - k_2}}{\pi^{2j + 1 - k_2}\, 2^{2j + k_1 - k_2}\, \langle f_1^\circ, f_1^\circ\rangle_{N_1} } L^{\mathrm{imp}}(f_1^\circ, f_2^\circ, \chi^{-1}, j).
   \]
  \end{proposition}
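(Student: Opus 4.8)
The plan is to pin down the factor $(\star)$ occurring in the Rankin--Selberg--Shimura theorem in this crystalline setting, and then to rewrite all the level-$Np^r$ data in terms of the level-$N_i$ newforms $f_i^\circ$; this is a classical calculation, so I only indicate the route. That theorem already gives $I(f_1,f_2,j+\chi)=(\star)\cdot L^{\mathrm{imp}}(f_1,f_2,\chi^{-1},j)$, so the two remaining tasks are (a) to compute $(\star)$ explicitly, which is the usual Rankin--Selberg unfolding in the nearly-holomorphic form due to Shimura, and (b) to relate the $p$-stabilised Dirichlet series to the one attached to $f_1^\circ$ and $f_2^\circ$, which is an elementary comparison of Euler factors at $p$. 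Throughout I would work on $\Gamma_1(Np^r)$ with $p^r$ large enough that $f_1$ and $f_2$ have level dividing $Np^r$.

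First I would express $\lambda_{f_1^c}\circ\Pi^{\mathrm{hol}}$ as a normalised Petersson product. Shimura's holomorphic projector is self-adjoint for the Petersson pairing against holomorphic cusp forms, and --- since $\alpha_1\neq\beta_1$ forces the $f_1^c$-eigenspace to be one-dimensional --- the $f_1^c$-isotypic projector is, up to the explicit scalar built into the definition of $f^c$, the Petersson pairing against $f_1$; this is precisely where the definition of $f^c$ rather than $f^*$ is needed. Thus one gets a formula of the shape
\[
 \lambda_{f_1^c}\bigl(\Pi^{\mathrm{hol}}(f_2\cdot\cE_N)\bigr) = \frac{\langle f_2\cdot\cE_N,\ f_1\rangle_{Np^r}}{\langle f_1^c,\ f_1\rangle_{Np^r}},
\]
so that, after inserting the prefactors $N^{\kappa_1+\kappa_2-2\sigma-2}$ and the diamond-operator normalisation, $I(f_1,f_2,\sigma)$ becomes a ratio of two Petersson integrals. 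Next I would unfold the numerator: the nearly-holomorphic Eisenstein series $\cE_N$ is a special value of a real-analytic Eisenstein series on $\Gamma_1(Np^r)$ (twisted by $\chi$ when $\chi\neq 1$), and unfolding $\langle f_2\cdot E(\,\cdot\,,s),\ f_1\rangle$ over $\Gamma_\infty\backslash\mathbb{H}$ turns it into an archimedean Mellin integral times the Dirichlet series $\sum_{p\nmid n}a_n(f_1)a_n(f_2)\chi(n)\,n^{-s}$, with the correction factor $L_{(\cdots)}(\varepsilon_1\varepsilon_2\chi^2,\,2j+2-k_1-k_2)$ accounted for by the normalisation of the Eisenstein series. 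The archimedean integral is a product of two Gamma integrals --- one the usual Rankin--Selberg factor, the other coming from the degree of near-holomorphy $\min(k_1-1-j,\,j-k_2)$ --- which at $s=j$ give $\Gamma(j)$ and $\Gamma(j-k_2+1)$; collecting the accompanying powers of $2$, $\pi$ and $i$ produces exactly the archimedean constant $\dfrac{(j-1)!\,(j-k_2)!\,i^{k_1-k_2}}{\pi^{2j+1-k_2}\,2^{2j+k_1-k_2}}$. In the ramified case the standard evaluation of the $\chi$-twisted Eisenstein series additionally contributes the Gauss-sum factor $G(\chi)^2$.

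The remaining ingredient is the comparison at $p$. Because $f_i$ is the $\alpha_i$-stabilisation of $f_i^\circ$, we have $a_n(f_i)=a_n(f_i^\circ)$ for every $n$ prime to $p$, so $\sum_{p\nmid n}a_n(f_1)a_n(f_2)\chi(n)\,n^{-s}$ is the Dirichlet series of $L^{\mathrm{imp}}(f_1^\circ,f_2^\circ,\chi^{-1},\,\cdot\,)$ with its Euler factor at $p$ deleted. Reinstating that factor --- and using $\alpha_i\beta_i=p^{k_i-1}\varepsilon_i(p)$ to bring it into the stated shape --- recovers $L^{\mathrm{imp}}(f_1^\circ,f_2^\circ,\chi^{-1},j)$ times the product of the four linear factors defining $\cE(f_1,f_2,j+\chi)$ when $\chi=1$, and times $G(\chi)^2$ and the displayed power of $p$ when $\chi$ is ramified. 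On the denominator side, comparing $\langle f_1^c,\ f_1\rangle_{Np^r}$ with $\langle f_1^\circ,f_1^\circ\rangle_{N_1}$ --- keeping track of the trace maps between the various levels, the index $[\Gamma_1(N):\Gamma_1(Np^r)]$, the passage from $f_1^\circ$ to its $\alpha_1$-stabilisation, and (for ramified $\chi$) the Atkin--Lehner behaviour at $p$ of the $\chi$-twist --- supplies the remaining normalisations together with the factor $\cE(f_1)\cE^*(f_1)=\bigl(1-\tfrac{\beta_1}{p\alpha_1}\bigr)\bigl(1-\tfrac{\beta_1}{\alpha_1}\bigr)$. Dividing through then gives the asserted identity, the cases $\chi=1$ and $\chi$ ramified running in parallel.

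The main obstacle is not any single step but the cumulative bookkeeping of normalisations: the precise Petersson convention, the Atkin--Lehner twists implicit in the definition of $f^c$, the several trace maps and level indices, the diamond-operator and $N^{\kappa_1+\kappa_2-2\sigma-2}$ prefactors, and the exact powers of $2$, $\pi$, $i$, $p$ and $N$ that must cancel to leave precisely $\cE(f_1,f_2,j)/\bigl(\cE(f_1)\cE^*(f_1)\bigr)$ times the archimedean constant above. To guard against sign and normalisation errors I would cross-check the final formula against the crystalline computations in \cite[Theorem 2.7.4]{KLZ17} and \cite[\S 5]{leiloefflerzerbes14}.
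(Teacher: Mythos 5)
Your overall route --- express $\lambda_{f_1^c}\circ\Pi^{\mathrm{hol}}$ as a normalised Petersson product, unfold against the Eisenstein series to get the Dirichlet series and the archimedean $\Gamma$-factors, and then compare Euler factors at $p$ and Petersson norms --- is the same as the one followed in the appendix of the paper. However, there is a genuine error at the first and most delicate step. You assert that, because $\alpha_1\neq\beta_1$ makes the eigenspace one-dimensional, the functional $\lambda_{f_1^c}$ is computed by pairing against $f_1$ itself, i.e.\ $\lambda_{f_1^c}(h)=\langle h, f_1\rangle_{Np^r}/\langle f_1^c, f_1\rangle_{Np^r}$, with the passage from $f^*$ to $f^c$ absorbing the adjoint of the Hecke action. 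The $f^c$-twist only corrects for the adjoints of the prime-to-$p$ operators $T_\ell$ (whose adjoints are $\langle\ell\rangle^{-1}T_\ell$). At level divisible by $p$ the operator $U_p$ is \emph{not} self-adjoint up to a nebentypus twist: its Petersson adjoint is conjugate to $U_p$ by the Atkin--Lehner involution at $p$. Consequently $f_1=f_{1,\alpha}$ is not orthogonal to the complementary eigenvector $f_{1,\beta}$ in the two-dimensional oldspace ($\langle f_{1,\beta},f_{1,\alpha}\rangle\neq 0$ in general), so pairing against $f_1$ does not factor through the $f_1^c$-isotypic projection and your displayed formula for $\lambda_{f_1^c}$ is false.

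The correct dual vector, used in Step 1 of the paper's appendix (following Hida), is $g_n = W_{N_1p}(f_{1,\beta})\mid_k\left(\begin{smallmatrix}p^{n-1}&\\&1\end{smallmatrix}\right)$ --- the Atkin--Lehner transform of the \emph{other} $p$-stabilisation, pushed up to level $Np^n$ and rescaled by $(\varepsilon_1(p)/\alpha_1)^{n-1}$. This is not bookkeeping that can be deferred: it is the sole source of the factor $\cE(f_1)\cE^*(f_1)=\bigl(1-\tfrac{\beta_1}{p\alpha_1}\bigr)\bigl(1-\tfrac{\beta_1}{\alpha_1}\bigr)$ (via the evaluation of $\langle W_{N_1p}(f_{1,\beta}), f_1^c\rangle$ against $\langle f_1^\circ,f_1^\circ\rangle_{N_1}$), of the power $\alpha_1^{-2r}$ appearing in $\cE(f_1,f_2,j+\chi)$ for ramified $\chi$ (from the rescaling with $n=2r$), and of the fact that $g_{2r}$ is supported on Fourier coefficients divisible by $p^{2r-1}$, which is what permits the $\chi$-twist to be pulled through the Petersson product and the subsequent Atkin--Lehner manipulation producing $G(\chi)^2$ and the correct powers of $p$. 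Without this input your unfolding cannot produce the stated constants. The remainder of your sketch (the archimedean $\Gamma$-integrals, the comparison of the imprimitive Dirichlet series with that of $f_1^\circ, f_2^\circ$) is consistent with Steps 2--4 of the paper, but rests on repairing the first step.
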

     
  \begin{remark}
   For $\chi$ trivial, this formula is standard, and its derivation can be found in many references  such as \cite{BDR-BeilinsonFlach, leiloefflerzerbes14, loefflerzerbes16}. For $\chi$ non-trivial, references are more scant; many sources, such as \cite{hida88}, give more general but less explicit formulas, and the work involved in recovering a completely explicit form for all the local factors is routine but unpleasant. For the convenience of the reader we give an account of the main steps required to evaluate $I(f_1, f_2, j+\chi)$ in this case in an appendix to this paper.
  \end{remark}
  
  The other case we shall consider is that where $f_1$ is still assumed crystalline, but $f_2$ has some non-trivial character $\varepsilon_{2, p}$ at $p$, and neither $\chi$ nor $\chi' = \chi \varepsilon_{2, p}^{-1}$ is trivial. We define $\beta_2 = p^{k_2 - 1} \varepsilon_{2, N}(p) / \alpha_2$, and we let the conductor of $\chi$ (resp.~$\chi'$) be $p^r$ (resp.~$p^{r'}$).
  
  \begin{proposition}
   \label{prop:interp-formula2}
   In this setting we have
   \begin{multline*} 
    I(f_1, f_2, j+\chi) = \left(\tfrac{p^{j-1}}{\alpha_1 \alpha_2} \right)^r G(\chi) \left(\tfrac{p^{j-1}}{\alpha_1 \beta_2} \right)^{r'} G(\chi') \\
    \times
    \frac{(j-1)!(j-k_2)! i^{k_1 - k_2}}{\cE(f_1) \cE^*(f_1)\pi^{2j + 1 - k_2}\, 2^{2j + k_1 - k_2}\, \langle f_1^\circ, f_1^\circ\rangle_{N_1} } L^{\mathrm{imp}}(f_1^\circ, f_2, \chi^{-1}, j).
   \end{multline*}
  \end{proposition}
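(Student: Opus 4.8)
The plan is to run the classical Rankin--Selberg argument exactly as in the crystalline case of Proposition~\ref{prop:interp-formula}, and to pinpoint the single place where something genuinely new happens, namely the local computation at $p$. By definition $I(f_1,f_2,j+\chi)$ is $N^{\kappa_1+\kappa_2-2\sigma-2}$ times $\lambda_{f_1^c}\big(\Pi^{\mathrm{hol}}(f_2\cdot\mathcal{E}_N(\kappa_1,\kappa_2,\sigma))\big)$; since holomorphic projection is adjoint to the inclusion of holomorphic cusp forms, the functional $\lambda_{f_1^c}$ can be realised by pairing against a suitable Atkin--Lehner translate of $\overline{f_1^c}$, so that up to the normalising power of $N$ the quantity becomes a ratio $\langle f_2\cdot\mathcal{E}_N(\kappa_1,\kappa_2,\sigma),\,-\rangle \big/ \langle f_1^c,\overline{f_1^c}\rangle_{Np^\infty}$. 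Because $\mathcal{E}_N$ is, up to the elementary factor $[e^{2\pi i d/N}+(-1)^{k_1-k_2}e^{-2\pi i d/N}]$, a $p$-depleted specialisation of a real-analytic Eisenstein series of weight $k_1-k_2$, the standard unfolding writes the numerator as an archimedean zeta integral times the value at $s=j$ of $\sum_{p\nmid n}a_n(f_1)a_n(f_2)\chi(n)n^{-s}$ (together with the correcting $L_{(\cdot)}$-factor of the definition).

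The archimedean integral and the Petersson norm $\langle f_1^c,\overline{f_1^c}\rangle_{Np^\infty}$ do not involve the ramified datum at $p$, so they are literally the same quantities that occur in Proposition~\ref{prop:interp-formula}: evaluating the archimedean integral by the Mellin computation underlying the Rankin--Selberg method (cf.\ \cite{leiloefflerzerbes14}) produces the factor $\tfrac{(j-1)!(j-k_2)!\,i^{k_1-k_2}}{\pi^{2j+1-k_2}\,2^{2j+k_1-k_2}}$, and descending the level-$Np^\infty$ norm of $f_1^c$ to the newform norm $\langle f_1^\circ,f_1^\circ\rangle_{N_1}$ introduces the factor $\mathcal{E}(f_1)\mathcal{E}^*(f_1)$ in the denominator --- the hypothesis $\alpha_1\ne\beta_1$ being exactly what makes $\mathcal{E}^*(f_1)\ne 0$ and guarantees that $f_1^c$ spans the eigenspace cut out by $\lambda_{f_1^c}$. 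This descent is the standard computation recalled in \cite[\S2.7]{KLZ17}. Since these parts are common to both propositions, in the appendix they are done once and the common factors extracted.

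The substantive step is then to relate the $p$-depleted value $\big(\sum_{p\nmid n}a_n(f_1)a_n(f_2)\chi(n)n^{-s}\big)\big|_{s=j}$, with its $L_{(\cdot)}$-factor, to $L^{\mathrm{imp}}(f_1^\circ,f_2,\chi^{-1},j)$. Here one uses that $f_1$ is crystalline, so $a_{p^m}(f_1)=\alpha_1^m$ with $\alpha_1\beta_1=p^{k_1-1}\varepsilon_{1,N}(p)$, and that $\varepsilon_{2,p}\ne 1$, so $f_2$ is genuinely ramified at $p$ with $a_{p^m}(f_2)=\alpha_2^m$, while $\beta_2$ is the Frobenius eigenvalue of the ramified character occurring in $\rho_{f_2}|_{G_{\Qp}}$ (whence $\alpha_2\beta_2=p^{k_2-1}\varepsilon_{2,N}(p)$). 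Restricting $\rho_{f_1}\otimes\rho_{f_2}\otimes\chi$ to $G_{\Qp}$ gives a sum of four characters: the two built from $\alpha_1$ are ramified of conductors $p^r$ and $p^{r'}$ and carry $\chi$ and $\chi'=\chi\varepsilon_{2,p}^{-1}$ respectively, while the two built from $\beta_1$ are precisely cancelled by the factor $\mathcal{E}(f_1)\mathcal{E}^*(f_1)$ already produced in the denominator. Computing the local Tate integrals of the two ramified characters --- the routine but lengthy bookkeeping that the appendix carries out, parallel to \cite{hida88} and \cite[Thm.~2.7.4]{KLZ17} --- yields exactly the constant $\big(\tfrac{p^{j-1}}{\alpha_1\alpha_2}\big)^r G(\chi)\big(\tfrac{p^{j-1}}{\alpha_1\beta_2}\big)^{r'}G(\chi')$: the powers of $p^{j-1}/\alpha_i$ come from the normalisation of $U_p$ together with the shift to $s=j$, the Gauss sums $G(\chi),G(\chi')$ from the functional equations of the two local integrals, and the factor $N^{\kappa_1+\kappa_2-2\sigma-2}$ absorbs the prime-to-$p$ level contributions of the Eisenstein series. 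The hard part --- and the reason the cases $\chi=1$ and $\chi'=1$ are excluded --- is checking that both ramified local integrals are of pure epsilon-factor type, with no surviving $L$-factor; once that is done, the three pieces (the local factor at $p$, the archimedean factor with the Petersson norm, and the global value $L^{\mathrm{imp}}(f_1^\circ,f_2,\chi^{-1},j)$) multiply together to give the stated formula.
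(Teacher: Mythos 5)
Your overall strategy---Rankin--Selberg unfolding against the $p$-depleted Eisenstein series, isolating the archimedean integral and the Petersson-norm normalisation as common with Proposition \ref{prop:interp-formula}, and reducing the new content to a local computation at $p$ that produces the two Gauss-sum factors attached to $\chi$ and $\chi'=\chi\varepsilon_{2,p}^{-1}$---matches the paper's, which carries out Proposition \ref{prop:interp-formula} in four explicit steps in the appendix and then notes that Proposition \ref{prop:interp-formula2} follows by the same argument with an Eisenstein series of level $Np^{r+r'}$ in place of $Np^{2r}$.

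There is, however, a genuine error in your account of the local step, at precisely the most delicate point. You assert that $\rho_{f_1}\otimes\rho_{f_2}\otimes\chi$ restricted to $G_{\Qp}$ contributes four characters, of which the two built from $\beta_1$ ``are precisely cancelled by the factor $\cE(f_1)\cE^*(f_1)$ already produced in the denominator.'' This cannot be right: $\cE(f_1)\cE^*(f_1)=\bigl(1-\tfrac{\beta_1}{p\alpha_1}\bigr)\bigl(1-\tfrac{\beta_1}{\alpha_1}\bigr)$ depends on neither $f_2$, nor $j$, nor $\chi$, so it is not a product of local factors of the tensor-product representation; and it visibly survives uncancelled in the denominator of the formula you are proving. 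Indeed, earlier in your own argument you derive this factor from the normalisation of $\lambda_{f_1^c}$, so it cannot also be consumed in cancelling the $\beta_1$-characters. The correct mechanism, as in Steps 1 and 4 of the appendix, is different: $\cE(f_1)\cE^*(f_1)$ comes from the pairing $\langle W_{N_1p}(f_{1,\beta}), f_1^c\rangle_{N_1(p)}$ between the two $p$-stabilisations that normalises the functional $\lambda_{f_1^c}$, while the $\beta_1$-line simply never enters the local integral at $p$. The unfolding a priori produces the $L$-value of the Atkin--Lehner translate of $f_{1,\beta}$, and because $f_{2,\chi^{-1}}$ is $p$-depleted one may replace that form by $\overline{\lambda(f_1^\circ)}\,f_1^\circ$ without changing the integral, the Atkin--Lehner pseudo-eigenvalues then cancelling between numerator and denominator. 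Without this step your sketch does not explain how the newform $L$-value $L^{\mathrm{imp}}(f_1^\circ, f_2, \chi^{-1}, j)$, rather than that of a $p$-stabilisation, emerges, nor why no further $\beta_1$-dependence survives.
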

  
 \section{Overconvergent families}
  
  Let us fix a finite extension $L / \Qp$ (contained in our fixed choice of algebraic closure $\Qpbar$).
  
  \begin{definition}
   Let the \emph{weight space}, $\cW$, be the rigid-analytic space over $L$ parametrising continuous characters of $\Zp^\times$, so that for an affinoid $L$-algebra $A$, we have $\cW(A) = \Hom(\Zp^\times, A^\times)$. 
  \end{definition}
  
  As in \cite{KLZ17}, we identify both $\ZZ$ and the set of Dirichlet characters of $p$-power order with subsets of $\cW(\bar{L})$ in the natural fashion; and we denote the group law on $\cW$ additively. If $\kappa = k + \chi$ is a locally algebraic character, we write $w(\kappa) \coloneqq k$.
  
  Now let $N$ be an integer coprime to $p$. It will be convenient to assume that $L$ contains the $N$-th roots of unity; let $\zeta_N \in L^\times$ denote the image of $e^{2\pi i / N} \in \overline{\QQ}$ under our chosen embedding.
     
  \begin{lemma}
   The power series in $E^{[p]}_\bfk$ and $F^{[p]}_\bfk$ in $\cO(\cW)[[q]]$ given by
   \[ E^{[p]}_\bfk \coloneqq \sum_{\substack{n \ge 1 \\ p \nmid n}} \left( \sum_{d \mid n} d^{\bfk - 1} (\zeta_N^d + (-1)^\bfk \zeta_N^{-d})\right) q^n \]
   and
   \[ F^{[p]}_\bfk \coloneqq \sum_{\substack{n \ge 1 \\ p \nmid n}} \left( \sum_{d \mid n} \left(\frac{n}{d}\right)^{\bfk - 1} (\zeta_N^d + (-1)^\bfk \zeta_N^{-d})\right) q^n \]
   are both the $q$-expansions of families of overconvergent modular forms over $\cW$ of tame level $\Gamma_1(N)$ and weight $\bfk$ (with radius of overconvergence bounded below over any affinoid in $\cW$).\qed
  \end{lemma}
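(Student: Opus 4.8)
The strategy is to exhibit these two formal $q$-expansions as the $p$-depletions of classical Eisenstein series and then appeal to the theory of overconvergent Eisenstein families. I will concentrate on $E^{[p]}_\bfk$; the series $F^{[p]}_\bfk$ is entirely analogous, since it arises from the same construction applied to the companion level-$N$ datum (equivalently, up to normalisation $F^{[p]}_\bfk$ is the tame Atkin--Lehner image $W_N E^{[p]}_\bfk$, as the substitution $d \mapsto n/d$ in the inner sum indicates, and $W_N$ preserves the class of overconvergent families of tame level $N$).

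First I would check that $E^{[p]}_\bfk$ is a well-defined element of $\cO(\cW)[[q]]$: for fixed $n$ with $p \nmid n$, every divisor $d$ of $n$ is a unit in $\Zp$, so $d^{\bfk-1}$ (and likewise $(-1)^\bfk$) may be read as the value at $d$ of the universal character $\bfk$ twisted by $x \mapsto x^{-1}$, which is rigid-analytic on $\cW$; the coefficient of $q^n$ is then a finite sum of such functions. Next I would pin down the classical specialisations. For each integer $k \ge 3$ (these weights accumulate at every point of $\cW$, and the specialisation lands in the component where $\bfk(-1) = (-1)^k$), the classical theory of Eisenstein series — in the normalisation of \cite[\S5]{leiloefflerzerbes14} — provides a holomorphic modular form $G_k$ of weight $k$ and level $\Gamma_1(N)$ whose $q$-expansion agrees with $\sum_{n \ge 1}\bigl(\sum_{d \mid n} d^{k-1}(\zeta_N^d + (-1)^k \zeta_N^{-d})\bigr)q^n$ away from its constant term. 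Applying the $p$-depletion operator $1 - V_p U_p$, which annihilates the constant term and every Fourier coefficient indexed by a multiple of $p$ while preserving the tame level $N$, shows that the specialisation of $E^{[p]}_\bfk$ at $\bfk = k$ is the $q$-expansion of the classical — hence overconvergent — modular form $(1 - V_p U_p)G_k$ of weight $k$ and tame level $N$.

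It remains to upgrade this collection of classical specialisations to a genuine overconvergent family over $\cW$, and this step is where the real content lies. The point is that the $G_k$ and their $p$-depletions are the specialisations of a single $p$-adic family: concretely, the $q$-expansions $E^{[p]}_\bfk$ are, in the $p$-depleted normalisation, the moments of Katz's Eisenstein measure attached to the level-$N$ datum $d \mapsto \zeta_N^d + \bfk(-1)\zeta_N^{-d}$, and by the work of Serre, Katz and Coleman this family is not merely a family of $p$-adic modular forms but is \emph{overconvergent}, with radius of overconvergence bounded below on each affinoid in $\cW$ because the construction produces sections over a fixed strict neighbourhood of the ordinary locus. Matching $q$-expansions then identifies $E^{[p]}_\bfk$ with the $q$-expansion of a family of overconvergent modular forms of weight $\bfk$ and tame level $\Gamma_1(N)$, the $q$-expansion map on such families being injective over the reduced rigid space $\cW$. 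The main obstacle is thus precisely this overconvergence-and-uniformity statement for the Eisenstein family; granting it, the remaining points (the explicit form of the moments, and the fact that $p$-depletion and $W_N$ preserve overconvergent families of a fixed tame level) are routine bookkeeping, and a detailed account in exactly this normalisation may be found in \cite[\S5]{leiloefflerzerbes14}.
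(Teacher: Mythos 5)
The paper gives no proof of this lemma at all --- it is stated with a terminal \qed, the author treating it as a standard fact --- so there is nothing to compare line by line; your reconstruction is the standard argument and, as far as I can see, a correct one. Identifying $E^{[p]}_\bfk$ and $F^{[p]}_\bfk$ as $p$-depletions of the classical level-$N$ Eisenstein series of \cite[\S 4--5]{leiloefflerzerbes14}, and then invoking the overconvergence of the Katz--Coleman Eisenstein family (which is indeed where the real content sits, and which gives the uniform lower bound on the radius of overconvergence over each affinoid) is surely what the author intends; your reduction of $F^{[p]}_\bfk$ to $E^{[p]}_\bfk$ via the tame Atkin--Lehner involution is also consistent with the relation between the two Eisenstein families in \emph{op.\ cit.}, though one could equally run the measure argument for $F^{[p]}_\bfk$ directly.
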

  
  \begin{lemma}
   Let $\chi$ be a Dirichlet character of $p$-power conductor, with values in $L$. Then, for any family of overconvergent modular forms $\cF$ of tame level $\Gamma_1(N)$ and weight $\kappa: \Zp^\times \to A^\times$, where $A$ is an affinoid algebra, the power series defined by
   \[ \theta^{\chi} \cF \coloneqq \sum_{\substack{n \ge 1 \\ p \nmid n}} a_n(\cF) \chi(n) q^n \]
   is the $q$-expansion of a family of overconvergent forms over $A$, of weight $\kappa + 2\chi$.
  \end{lemma}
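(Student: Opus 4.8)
The plan is to express $\theta^{\chi}$, up to a Gauss-sum factor, as a sum of translation-at-$p$ operators applied to the $p$-depletion of $\cF$; each of these operators preserves overconvergence, and the classical behaviour of the Nebentypus under twisting produces the weight shift.

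First I would pass to the $p$-depletion $\cF^{[p]} \coloneqq (1 - V_p U_p)\cF = \sum_{p \nmid n} a_n(\cF)\, q^n$. Since $U_p$ and $V_p$ preserve families of overconvergent forms of tame level $\Gamma_1(N)$ and weight $\kappa$, at the cost of a bounded shrinking of the radius of overconvergence, $\cF^{[p]}$ is again such a family, and $\theta^{\chi}\cF = \theta^{\chi}\cF^{[p]}$ (both being $\sum_{p\nmid n} a_n(\cF)\chi(n)\, q^n$). If $\chi$ is trivial the claim is now immediate, so assume $\chi$ primitive of conductor $p^r$ with $r \ge 1$. After enlarging $L$ to contain a primitive $p^r$-th root of unity $\zeta_{p^r}$ --- harmless, as the statement descends along finite extensions of $L$ and the $q$-expansion of $\theta^{\chi}\cF$ lies in $A[[q]]$ from the start --- the Gauss-sum identity $\chi(n) = G(\bar\chi)^{-1}\sum_{a \in (\ZZ/p^r\ZZ)^\times} \bar\chi(a)\,\zeta_{p^r}^{an}$, valid for $(n,p)=1$, gives, after rearranging $q$-expansions,
\[ \theta^{\chi}\cF = \frac{1}{G(\bar\chi)} \sum_{a \in (\ZZ/p^r\ZZ)^\times} \bar\chi(a)\, R_a\bigl(\cF^{[p]}\bigr), \qquad R_a(g) \coloneqq \sum_{p \nmid n} a_n(g)\,\zeta_{p^r}^{an}\, q^n, \]
where $G(\bar\chi) = \sum_a \bar\chi(a)\,\zeta_{p^r}^{a}$ is a nonzero element of $L$.

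On $q$-expansions $R_a$ is the substitution $q \mapsto \zeta_{p^r}^{a} q$, that is $\tau \mapsto \tau + a/p^r$; since $\cF^{[p]}$ is supported on $n$ prime to $p$, this is the pullback of $\cF^{[p]}$ along the automorphism of the relevant overconvergent modular curve induced by $\left(\begin{smallmatrix} 1 & a \\ 0 & p^r \end{smallmatrix}\right) \in \mathrm{GL}_2(\Qp)$, an element of the submonoid acting on overconvergent forms of all $p$-power level structures. Such an automorphism is compatible with the weight-$\kappa$ modular sheaf, carries overconvergent sections to overconvergent sections (shrinking the overconvergence radius by a bounded amount), and fixes the tame level $\Gamma_1(N)$, the matrix being trivial modulo $N$. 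Hence each $R_a(\cF^{[p]})$, and therefore the displayed combination, is overconvergent of weight $\kappa$ relative to an auxiliary $p$-power level structure. The diamond operators $\langle d \rangle_p$ at $p$ act on the collection $\{R_a(\cF^{[p]})\}_a$ through the scalar $\kappa(d)$ composed with a permutation of the index $a$, in such a way that $\sum_a \bar\chi(a)\,R_a(\cF^{[p]})$ becomes a $\langle d\rangle_p$-eigenvector with eigenvalue $\kappa(d)\chi(d)^2$ --- this is exactly the classical fact that twisting by $\chi$ multiplies the Nebentypus by $\chi^2$. So $\theta^{\chi}\cF$ descends to an overconvergent family of tame level $\Gamma_1(N)$ and weight $\kappa + 2\chi$, as required.

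The main point requiring genuine input --- rather than formal manipulation of $q$-expansions --- is the claim that pullback along $\left(\begin{smallmatrix} 1 & a \\ 0 & p^r\end{smallmatrix}\right)$ preserves overconvergence with a bounded loss of radius and is compatible with the weight-$\kappa$ sheaf in families; this draws on the explicit construction of the overconvergent modular sheaf and of the monoid action of $\mathrm{GL}_2(\Qp)$ on its sections (via the theory of the canonical subgroup), and is where the real work sits. Everything else is a rearrangement of $q$-expansions together with the elementary theory of Gauss sums.
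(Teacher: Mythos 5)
Your proof is correct and follows essentially the same route as the paper's sketch: both decompose $\theta^{\chi}$ via a Gauss-sum identity into a linear combination of pullbacks along the translations $\tau \mapsto \tau + a/p^r$, invoke the fact that these maps extend to overconvergent neighbourhoods of the ordinary locus (this is the genuine geometric input in both versions), and use the action of the diamond operators at $p$ through $\chi^2$ to descend to tame level $\Gamma_1(N)$ and weight $\kappa + 2\chi$. The only (harmless) difference is your preliminary passage to the $p$-depletion $\cF^{[p]}$, which the paper avoids by using the Gauss-sum identity for primitive $\chi$ directly on all coefficients.
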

 
  \begin{proof}[Sketch of proof]
   Let $\chi$ have conductor $p^r$. Then there is a ``twisting homomorphism'' $t_j: X_1(Np^{2r}) \to X_1(N)$, given in terms of complex uniformizations by $\tau \mapsto \tau + \tfrac{j}{p^r}$, for any $j \in \ZZ / p^r \ZZ$. This preserves the component of the ordinary locus containing $\infty$, and extends to all sufficiently small overconvergent neighbourhoods of it, so it induces a pullback map on overconvergent modular (or cusp) forms. Since $\theta^{\chi}\mathcal{F}$ is equal to $\sum_{j \in (\ZZ / p^r \ZZ)^\times} \chi(j)^{-1} t_j^*(\mathcal{F})$ up to a constant, it is overconvergent of level $\Gamma_1(Np^{2r})$ and weight-character $\kappa$; and the diamond operators at $p$ act on it via $\chi^2$, so it descends to an overconvergent form of level $\Gamma_1(N) \cap \Gamma_0(p^{2r})$ and weight $\kappa + 2\chi$. Via the canonical-subgroup map we can regard it as an overconvergent form of level $N$.
  \end{proof} 
  
  In order to allow more general twists, we work with families of nearly-overconvergent modular forms (of some finite degree $r \ge 0$), in the sense of \cite[\S 3.3.2]{Urban-nearly-overconvergent}. If $\tau$ is a locally algebraic weight with $w(\tau) \ge 0$, we may thus define $\theta^{\tau}(\cF)$ as a family of nearly-overconvergent forms of weight $\kappa + 2\tau$ and degree $w(\tau)$.
     
  \begin{lemma}
   If $\cF$ is a Coleman family (a family of overconvergent normalised eigenforms of finite slope), new of some tame level $N$, defined over some affinoid $A \to \cW$, then there is a unique tame level $N$ Coleman family $\cF^c$ over $A$ satisfying
   \[ a_n(\cF^c) = \varepsilon_N(n)^{-1} a_n(\cF) \]
   for all $(n, N) = 1$ (including $n = p$). Here $\varepsilon_N: (\ZZ / N\ZZ)^\times \to L^\times$ is the prime-to-$p$ nebentype of $\cF$.
  \end{lemma}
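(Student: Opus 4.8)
\emph{Uniqueness.} The prescribed relations already determine the Hecke eigenvalue $a_\ell(\cF^c)$ for every prime $\ell\nmid N$, in particular the $U_p$-eigenvalue $a_p(\cF^c)=\varepsilon_N(p)^{-1}a_p(\cF)$. A new Coleman family of tame level $N$ over $A$ is pinned down by this data: it amounts to a morphism from $A$ to the cuspidal eigencurve of tame level $N$, and the image of a point there is determined by the eigenvalues away from $N$, since at each classical point the associated $p$-stabilised newform is determined by strong multiplicity one. As the classical points are Zariski-dense in $A$, two such families agreeing away from $N$ coincide, so at most one $\cF^c$ exists.

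\emph{Existence.} Recall the construction of $f^c$ for a single classical form: it is a $p$-stabilised newform of tame level \emph{exactly} $N$ --- in the crystalline case the $p$-stabilisation, with $U_p$-eigenvalue $\varepsilon_N(p)^{-1}$ times that of $f$, of the newform $\overline{f^\circ}$ conjugate to the underlying newform $f^\circ$ (so that $a_\ell(\overline{f^\circ})=\overline{a_\ell(f^\circ)}=\varepsilon_N(\ell)^{-1}a_\ell(f^\circ)$ for $\ell\nmid N$, by self-adjointness of the normalised Hecke operators) --- and moreover $W_N f=\lambda_N(f)\cdot f^c$, where $W_N$ is the tame-level Atkin--Lehner--Fricke operator and $\lambda_N(f)\ne 0$ is the pseudo-eigenvalue. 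To propagate this along $\cF$, I would use that $W_N$ --- being defined over $L$ (as $L\supseteq\mu_N$) and an operation only at the primes dividing $N$, hence preserving the $p$-ordinary locus, extending to all sufficiently small overconvergent neighbourhoods of it, commuting with the diamond operators at $p$, and satisfying the twisted relations $W_N T_\ell=\varepsilon_N(\ell)^{-1}T_\ell W_N$ for $\ell\nmid N$ and $W_N U_p=\varepsilon_N(p)^{-1}U_p W_N$ --- acts on families of overconvergent cusp forms of tame level $N$ over $A$, conjugating the tame part of the nebentype. Applying $W_N$ to $\cF$ (as a family of overconvergent forms) gives $W_N(\cF)=\lambda_N(\cF)\cdot\cF^c$, where $\lambda_N(\cF)=a_1(W_N(\cF))\in\cO(A)$ and $\cF^c$ is a family of normalised overconvergent eigenforms over $A$, of the same weight as $\cF$, with $a_n(\cF^c)=\varepsilon_N(n)^{-1}a_n(\cF)$ for all $(n,N)=1$ (including $n=p$). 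The function $\lambda_N(\cF)$ is a unit: if it vanished at a point, then so would $W_N(\cF)$, contradicting the invertibility of $W_N$ (which squares to a unit scalar) together with $a_1(\cF)=1$. Dividing through, $\cF^c$ has finite slope (its $U_p$-eigenvalue being $\varepsilon_N(p)^{-1}a_p(\cF)$), hence is a Coleman family of tame level $N$; it is new because $W_N$ preserves the $N$-new part; and at each classical point it recovers $(\cF_\kappa)^c$. This is the required family.

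\emph{Main obstacle.} The technical heart is the family version of Atkin--Lehner theory: one must know that $W_N$ comes from a genuine morphism of the rigid spaces of tame-level-$N$ overconvergent modular forms over an arbitrary affinoid base, not merely an operation defined form by form, and one must establish the twisted commutation $W_N U_p=\varepsilon_N(p)^{-1}U_p W_N$. That twist on $U_p$ is precisely what makes the ``$n=p$'' clause in the definition of $\cF^c$ come out correctly and what forces $\cF^c$ to have tame level exactly $N$: the naive candidate $\sum_{(n,N)=1}\varepsilon_N(n)^{-1}a_n(\cF)q^n$ is killed by every $U_\ell$ with $\ell\mid N$, and so would sit at a strictly larger tame level. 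Everything else --- finite slope, newness, normalisation --- is routine once $W_N$ is in hand.
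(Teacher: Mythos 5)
Your argument is correct. For comparison: the paper's own ``proof'' is a single sentence referring back to the method of the preceding lemma, i.e.\ realise the desired operation on $q$-expansions by a geometric operator on the modular curve that preserves (a component of) the ordinary locus and extends to overconvergent neighbourhoods of it. A literal transposition of that method would mean twisting by the character $\varepsilon_N^{-1}$ via the maps $\tau \mapsto \tau + j/N_\varepsilon$; as you yourself observe in your closing paragraph, that cannot be the whole story, since the twist annihilates the coefficients at primes dividing $N$ and lands at a strictly larger tame level, so one would still need a ``newform projection in families'' to recover a new, normalised family of tame level exactly $N$. Your route through the partial Atkin--Lehner operator $W_N$ is the natural way to implement the same geometric idea while avoiding that extra descent step: since $W_N$ only touches the level structure at primes dividing $N$ (prime to $p$), it preserves the overconvergent loci and satisfies the twisted commutation relations with $T_\ell$ ($\ell \nmid N$) and $U_p$, and it produces directly a new eigenfamily with the correct $U_\ell$-eigenvalues for $\ell \mid N$. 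Your uniqueness argument (strong multiplicity one at a Zariski-dense set of classical points, plus reducedness of the eigencurve) and your verification that the pseudo-eigenvalue $\lambda_N(\cF)$ is a unit (via invertibility of $W_N$ and the $q$-expansion principle for normalised eigenforms) are both sound. The one ingredient you correctly flag as the technical heart --- that $W_N$ is a genuine morphism on families of overconvergent forms over an affinoid base, not just a form-by-form operation --- is standard but is indeed the content that the paper's one-line proof is implicitly delegating to the reader; so your write-up is, if anything, more complete than the original.
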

 
  \begin{proof}
   This is proved in the same way as the previous lemma.
  \end{proof}
  
  We now recall the construction of the universal object parametrising Coleman families -- the eigencurve:
  
  \begin{definition}
   Let $\mathcal{C}_N$ denote the Coleman--Mazur--Buzzard cuspidal eigencurve, of tame level $N$.
  \end{definition}
  
  By definition, $\mathcal{C}_N$ is a reduced rigid space, equidimensional of dimension 1, equipped with a morphism $\mathcal{C}_N \to \cW$; and there is a universal eigenform over $\mathcal{C}_N$ -- that is, $\mathcal{C}_N$ comes equipped with a power series $\mathcal{F}^{\mathrm{univ}} = \sum a_n q^n \in \cO(\mathcal{C}_N)[[q]]$, with $a_1 = 1$ and $a_p$ invertible on $\mathcal{C}_N$, with the following universal property:
  \begin{quotation}
   For any affinoid $X$ with a weight morphism $\kappa: X \to \cW$, and any family of finite-slope eigenforms $\cF_X$ over $X$ of tame level $N$ and weight $\kappa$, there is a unique morphism $X \to \mathcal{C}_N$ lifting $\kappa$ such that $\cF_X$ is the pullback of $\mathcal{F}^{\mathrm{univ}}$.
  \end{quotation}
  
 \section{Two-variable $p$-adic $L$-functions}
  
  Let $U_1$ and $U_2$ be two affinoid subdomains of $\cW$. We write $\bfk_i: \Zp^\times \to \cO(U_i)^\times$ for the pullbacks of the canonical character $\bfk$. We suppose that we are given the following data:
  \begin{itemize}
   \item a finite flat covering $\tU_2 \to U_2$,
   \item  an overconvergent family $\cF_2 \in M_{\bfk_2}^\dagger(\Gamma_1(N); \tU_2)$ (not necessarily cuspidal or normalised),
   \item a locally analytic character $\tau \in \cW(L)$, with $t = w(\tau) \ge 0$.
  \end{itemize} 
  
  We define two families of nearly-overconvergent forms over $U_1 \times \tU_2$, both of weight $\bfk_1$ and degree of near-overconvergence $\le t$, by 
  \begin{align*}
   \Xi_\tau^{\ss} &\coloneqq \cF_2 \cdot \theta^{\tau}\left( E^{[p]}_{\bfk_1 - \bfk_2 - 2\tau}\right), \\
   \Xi_\tau^{\ds} &\coloneqq \cF_2 \cdot \theta^{\tau}\left( F^{[p]}_{\bfk_1 - \bfk_2 - 2\tau}\right).
  \end{align*}
  
  We apply to both of these forms the overconvergent projector $\Pi^{\mathrm{oc}}$ of \cite[\S 3.3.4]{Urban-nearly-overconvergent}. This gives elements 
  \[ 
   \Pi^{\mathrm{oc}}\left(\Xi_{\tau}^{\ss}\right),\ \Pi^{\mathrm{oc}}\left(\Xi_\tau^{\ds}\right) 
   \in \frac{1}{\prod_{m = 2}^{2t}\left( \nabla_1 - m \right)} S^{\dagger}_{\bfk_1}\left(\Gamma_1(N), U_1 \times \tU_2\right),
  \]
  where $\nabla_1 \in \cO(U_1)$ is the pullback to $U_1$ of the unique rigid-analytic function $\nabla \in \cO(W)$ such that $\nabla(\kappa) = w(\kappa)$ for all locally-algebraic $\kappa$.
  
  \begin{proposition}
   Let $(\kappa_1, \kappa_2)$ be a locally-algebraic point of $U_1 \times U_2$ such that $1 \le k_2 \le k_1 -1-t$, where $k_i = w(\kappa_i)$, and with $k_1 \notin \{2, \dots, 2t\}$. Let $\tilde\kappa_2$ be a point of $\tU_2$ above $\kappa_2$, and $f_2$ the specialisation of $\cF_2$ at $\tilde\kappa_2$. Let us suppose that $f_2$ is a classical modular form.
   
   Then the specialisations of $\Pi^{\mathrm{oc}}\left(\Xi_{\tau}^{\ss}\right)$ and $\Pi^{\mathrm{oc}}\left(\Xi_{\tau}^{\ds}\right)$ at $(\kappa_1, \tilde\kappa_2)$ are given by
   \begin{align*}
    \Pi^{\mathrm{oc}}\left(\Xi_{\tau}^{\ss}\right)(\kappa_1, \tilde\kappa_2) &= \Pi^{\mathrm{hol}}\Big( f_2 \cdot \cE_N(\kappa_1, \kappa_2, \kappa_1 - 1 - \tau)\Big),\\
    \Pi^{\mathrm{oc}}\left(\Xi_{\tau}^{\ds}\right)(\kappa_1, \tilde\kappa_2) &= \Pi^{\mathrm{hol}}\Big( f_2 \cdot \cE_N(\kappa_1, \kappa_2, \kappa_2 + \tau)\Big).
   \end{align*}
  \end{proposition}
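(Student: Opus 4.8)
The plan is to reduce the statement to an identity of $q$-expansions together with the interpolation property of the overconvergent projector $\Pi^{\mathrm{oc}}$. First I would recall that specialisation at a classical point commutes with the $\theta$-operator and with multiplication of $q$-expansions, so the specialisation of $\Xi_\tau^{\ss}$ at $(\kappa_1,\tilde\kappa_2)$ is simply $f_2 \cdot \theta^\tau\bigl(E^{[p]}_{\bfk_1-\bfk_2-2\tau}\bigr)$ evaluated at $\kappa_1$, which is $f_2$ times the $q$-expansion $\sum_{p\nmid n} a_n\,\chi(n)\,q^n$ obtained from $E^{[p]}$ at weight $k_1-k_2-2t$ and then applying the twisting character coming from $\tau = t + \chi$. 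Chasing the definitions of $E^{[p]}_\bfk$ and of $\theta^\tau$ (the latter from the lemma on twisting, which contributes the factor $d^{t}\cdot d^{\text{(character part)}}$ on the divisor $d$), one matches this term by term against the series $\cE_N(\kappa_1,\kappa_2,\kappa_1-1-\tau)$: the exponent $\kappa_1-\kappa_2-1$ on $n/d$ in $E^{[p]}$ becomes, after the twist, exactly the exponent $(\kappa_1-1-\tau)-\kappa_2$ appearing in the first sum-over-divisors slot of $\cE_N$, and the complementary exponent on $d$ works out to $\kappa_1 - (\kappa_1-1-\tau) - 1 = \tau$, consistent with $\theta^\tau$ raising divisors to the $\tau$-power. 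The parity sign $(-1)^{\kappa_1-\kappa_2}$ in the bracket $[\zeta_N^d + (-1)^{\kappa_1-\kappa_2}\zeta_N^{-d}]$ matches since $E^{[p]}$ carries the sign $(-1)^{\bfk_1-\bfk_2-2\tau} = (-1)^{\bfk_1-\bfk_2}$. The analogous bookkeeping with $F^{[p]}_\bfk$ (which puts the free exponent on $n/d$) yields $\Xi_\tau^{\ds}$ specialising to $f_2 \cdot \cE_N(\kappa_1,\kappa_2,\kappa_2+\tau)$, where now the divisor $d$ carries exponent $(\kappa_2+\tau)-\kappa_2 = \tau$ and $n/d$ carries exponent $\kappa_1-(\kappa_2+\tau)-1 = \kappa_1-\kappa_2-1-t-\chi$, again matching after the twist.

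With these two $q$-expansion identities in hand, the hypothesis $1 \le k_2 \le k_1-1-t$ guarantees that the specialised argument $j = k_1-1-t$ (resp.\ $j = k_2+t$, after incorporating the integer part of $\tau$) lies in the range $1 \le k_2 \le j \le k_1-1$ required by the first Lemma, so that $\cE_N(\kappa_1,\kappa_2,\,\cdot\,)$ is genuinely a nearly-holomorphic form of degree at most $\min(k_1-1-j, j-k_2) \le t$; multiplying by the classical form $f_2$ of weight $k_2$ gives a nearly-holomorphic form of weight $k_1$ and degree $\le t$. The content of the proposition is then that $\Pi^{\mathrm{oc}}$, which is defined $p$-adic-analytically on the family, specialises at such a classical point to Shimura's holomorphic projector $\Pi^{\mathrm{hol}}$. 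This is precisely the interpolation property of the overconvergent projector established in \cite[\S 3.3.4]{Urban-nearly-overconvergent}: on a nearly-overconvergent form of finite degree whose weight avoids the poles at $\{2,\dots,2t\}$, the operator $\Pi^{\mathrm{oc}}$ agrees with the classical holomorphic projection. The side condition $k_1 \notin \{2,\dots,2t\}$ is exactly what is needed to invert $\prod_{m=2}^{2t}(\nabla_1 - m)$ at the point $\kappa_1$, so that the specialisation of $\Pi^{\mathrm{oc}}(\Xi_\tau^{\bullet})$ is a genuine classical cusp form rather than lying in the denominators.

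The main obstacle is the careful matching of exponents and characters in the first step: one must track separately the integer part $t = w(\tau)$ and the finite-order part of $\tau$ through the definition of $\theta^\tau$ for nearly-overconvergent families, and verify that the result of twisting $E^{[p]}$ (resp.\ $F^{[p]}$) of weight $\bfk_1-\bfk_2-2\tau$ reproduces the three ingredients of $\cE_N$ — the exponent on $d$, the exponent on $n/d$, and the nebentypus action of the diamond operators at $p$ (which the Lemma says is $\varepsilon_{1,p}-\varepsilon_{2,p}$, and which on the family side is visibly $\bfk_1 - \bfk_2 - 2\tau + 2\tau$ restricted to the relevant components). Once this combinatorial identity is pinned down, invoking the interpolation of $\Pi^{\mathrm{oc}}$ is immediate and the proposition follows.
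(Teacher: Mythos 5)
Your proof follows the paper's argument exactly: the paper likewise reduces the claim to the $q$-expansion identities $\theta^{\tau}\bigl(E^{[p]}_{\bfk_1-\bfk_2-2\tau}\bigr) = \cE_N(\kappa_1,\kappa_2,\kappa_1-1-\tau)$ and $\theta^{\tau}\bigl(F^{[p]}_{\bfk_1-\bfk_2-2\tau}\bigr) = \cE_N(\kappa_1,\kappa_2,\kappa_2+\tau)$ (which it dismisses as an ``elementary computation'') and then invokes the compatibility of $\Pi^{\mathrm{oc}}$ with $\Pi^{\mathrm{hol}}$, with the condition $k_1 \notin \{2,\dots,2t\}$ playing precisely the role you assign it. The only blemish is a mislabelling in your exponent chase --- in $E^{[p]}_{\bfk}$ the exponent $\bfk-1$ sits on $d$, not on $n/d$, and specialises to $\kappa_1-\kappa_2-2\tau-1$ rather than $\kappa_1-\kappa_2-1$, with $\theta^{\tau}$ contributing $n^{\tau}=d^{\tau}(n/d)^{\tau}$ rather than a factor on $d$ alone --- but the final matched exponents you arrive at are the correct ones, so the verification goes through.
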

  
  \begin{proof}
   An elementary computation shows that $\theta^{\tau}\left( E^{[p]}_{\kappa_1 - \kappa_2 - 2\tau}\right) = \cE_N(\kappa_1, \kappa_2, \kappa_1 - 1 - \tau)$ and similarly that $\theta^{\tau}\left( F^{[p]}_{\kappa_1 - \kappa_2 - 2\tau}\right) = \cE_N(\kappa_1, \kappa_2, \kappa_2 + \tau)$. The result now follows from the compatibility of the holomorphic and overconvergent projection operators.
  \end{proof}
  
  \begin{remark}
   We may consider the formal power series $\cF_2 \cdot \cE_N(\bfk_1, \bfk_2, \sigma)$ as a family of $p$-adic modular forms over $U_1 \times \tU_2 \times \cW$. This is not overconvergent, or even nearly-overconvergent, in any reasonable sense, since the near-overconvergence degrees of its specialisations are not bounded above over any open affinoid in the parameter space $U_1 \times \tU_2 \times \cW$. However, the above proposition gives two families of 2-dimensional ``slices'' of the parameter space for which the above family does become nearly-overconvergent, of bounded degree, over any given slice. 
  \end{remark}
     
  Let us now suppose that $k_1 \ge 2$ is a non-negative integer lying in $U_1$, $N_f$ is an integer dividing $N$, and $f_1 \in S_{k_1}(\Gamma_1(N_f) \cap \Gamma_0(p), L)$ is a ``noble eigenform'' in the sense of \cite[Definition 4.6.3]{loefflerzerbes16}; that is, $f_1$ is a $p$-stabilisation of some normalised newform of level $\Gamma_1(N_f)$ whose Hecke polynomial at $p$ has distinct roots, and a mild extra condition is satisfied in the case of critical-slope eigenforms.
  
  Then, after possibly shrinking the affinoid neighbourhood $U_1 \ni k_1$, we can find a Coleman family of normalised eigenforms $\cF_1$ over $U_1$ whose specialisation at $k_1$ is $f_1$; and a continuous $\cO(U_1)$-linear functional 
  \[ \lambda_{\cF_1^c}: S_{\bfk_1}^\dagger(\Gamma_1(N_f), U_1) \to \cO(U_1) \]
  factoring through the Hecke eigenspace associated to the dual family $\cF_1^c$, and mapping the normalised eigenform $\cF_1^c$ itself to 1. We extend this to a linear functional on forms of level $N$ by composing with the trace map. We can therefore define two meromorphic functions, both lying in the space $\tfrac{1}{\prod_{j = 2}^{2w(\tau)}\left( \nabla_1 - j \right)}\cO(U_1 \times \tU_2)$, by the formulae
  \[ 
   L_p^{\ss}(\cF_1, \cF_2; \tau) = N^{(-\bfk_1 + \bfk_2 + 2\tau)} \lambda_{\cF_1^c}\Big[ \Pi^{\mathrm{oc}}\left(\Xi_{\tau}^{\ss}\right)\Big], 
  \]
  and
  \[ 
   L_p^{\ds}(\cF_1, \cF_2; \tau) = N^{(\bfk_1 - \bfk_2 - 2\tau-2)}\lambda_{\cF_1^c}\Big[\Pi^{\mathrm{oc}}\left(\Xi_{\tau}^{\ds}\right)\Big].
  \]
  By construction, $L_p^{\ss}$ interpolates the values $I(f_1, f_2, \kappa_1 -1 -\tau)$, and $L_p^{\ds}$ the values $I(f_1, f_2, \kappa_2 + \tau)$, for varying $f_1$ and $f_2$ (but fixed $\tau$).
  
  \begin{remark}
   Our eventual goal is to show that there is a 3-variable $L$-function on $U_1 \times \tU_2 \times \cW$ interpolating all critical values of the Rankin $L$-function. The 2-variable $L$-functions $ L_p^{\ss}$ and $L_p^{\ds}$ will turn out to be slices of this 3-variable $L$-function, along two different families of 2-dimensional subspaces of the parameter space.
  \end{remark}
  
  Let us, finally, specialise to the case where $\tU_2$ is an affinoid subdomain of the eigencurve $\mathcal{C}_{N_2}$, and $\cF_2$ is the universal eigenform. One knows that $\mathcal{C}_{N_2}$ is admissibly covered by affinoids $\tU_2$ with the property that $\tU_2$ is a finite flat covering of an admissible open in $\cW$, as above; and the above construction is clearly compatible on overlaps, so we obtain two families of meromorphic functions on $U_1 \times \mathcal{C}_{N_2}$.

 \section{Compatibility of the two families}
  
  \begin{definition}
   Given a locally algebraic $\tau$ with $w(\tau) \ge 0$, we define two 2-dimensional rigid-analytic subspaces of $U_1 \times \tU_2 \times \cW$ by
   \[ 
    \cW^{\ss}(\tau) = \{ (\kappa_1, \tilde\kappa_2, \kappa_1 - 1 - \tau): \kappa_1 \in U_1, \tilde \kappa_2 \in U_2\} 
   \]
   and
   \[ 
    \cW^{\ds}(\tau) = \{ (\kappa_1, \tilde\kappa_2, \kappa_2 + \tau): \kappa_1 \in U_1, \tilde \kappa_2 \in U_2\}.
   \]
   
   We set $\Sigma_{\mathrm{crit}}^{\ss}(\tau) = \Sigma_{\mathrm{crit}} \cap \cW^{\ss}(\tau)$ and similarly $\Sigma_{\mathrm{geom}}^{\ss}(\tau)$, $\Sigma_{\mathrm{crit}}^{\ds}(\tau)$, $\Sigma_{\mathrm{geom}}^{\ds}(\tau)$.
  \end{definition}
  
  We can then regard $L_p^{\ss}(\cF_1, \cF_2; \tau)$ as a $p$-adic meromorphic function on $\cW^\ss(\tau)$ in a natural way, interpolating classical $L$-values at the points in $\Sigma_{\mathrm{crit}}^{\ss}(\tau)$; and similarly for $\ds$.
  
  We have the following technical lemma:
  
  \begin{lemma}
   Let $\tau, \tau'$ be two locally-algebraic characters with $w(\tau) \ge 0, w(\tau') \ge 0$, and suppose that we have
   \[ \{ \kappa - (1 + \tau + \tau') : \kappa \in U_1\} \subseteq U_2.\]
   Then $L_p^{\ss}(\cF_1, \cF_2; \tau)$ and $L_p^{\ds}(\cF_1, \cF_2; \tau')$ coincide as functions on $\cW^\ss(\tau) \cap \cW^\ds(\tau')$.
  \end{lemma}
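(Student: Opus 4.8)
The plan is to restrict both functions to the one-dimensional intersection $Z \coloneqq \cW^{\ss}(\tau) \cap \cW^{\ds}(\tau')$, to show that they agree at a Zariski-dense set of classical points at which both interpolate the \emph{same} complex $L$-value, and then to invoke the identity principle. First I would describe $Z$: a point of $\cW^{\ss}(\tau)$ has the form $(\kappa_1, \tilde\kappa_2, \kappa_1 - 1 - \tau)$, and it lies on $\cW^{\ds}(\tau')$ exactly when its third coordinate also equals $\kappa_2 + \tau'$, where $\kappa_2$ is the image of $\tilde\kappa_2$ in $\cW$; that is, when $\kappa_2 = \kappa_1 - (1 + \tau + \tau')$. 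Since $\kappa \mapsto \kappa - (1+\tau+\tau')$ is an automorphism of $\cW$, the hypothesis of the lemma says precisely that it carries $U_1$ into $U_2$, so projection onto the first coordinate identifies $Z$ with the preimage in $\tU_2$ of the admissible open $U_1 - 1 - \tau - \tau' \subseteq U_2$. In particular $Z$ is an admissible open of the eigencurve $\mathcal{C}_{N_2}$ — hence reduced, equidimensional of dimension $1$, and with classical points Zariski-dense — and is finite flat over $U_1$ via $\kappa_1$. Moreover $L_p^{\ss}(\cF_1, \cF_2; \tau) \in \tfrac{1}{\prod_{j=2}^{2w(\tau)}(\nabla_1 - j)}\,\cO(U_1 \times \tU_2)$ and $L_p^{\ds}(\cF_1, \cF_2; \tau') \in \tfrac{1}{\prod_{j=2}^{2w(\tau')}(\nabla_1 - j)}\,\cO(U_1 \times \tU_2)$, so their restrictions to $Z$ are meromorphic with polar loci contained in the finite set of points of $Z$ above the integer weights $\kappa_1 \in \{2, \dots, 2\max(w(\tau), w(\tau'))\}$. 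It therefore suffices to exhibit a Zariski-dense subset of $Z$, disjoint from this finite set, on which the two restrictions coincide.

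Next I would compare values at classical points. Take an integer weight $k_1 \in U_1 \cap \ZZ$ large enough that $k_1 > 2\max(w(\tau), w(\tau'))$, that $k_1 \ge w(\tau) + w(\tau') + 3$, and that $k_1 - 1$ exceeds the slope bounds of $\cF_1$ and $\cF_2$, and chosen so that the specialisation $f_1$ of $\cF_1$ at $k_1$ is a noble eigenform; nobility is an open condition on $U_1$ that holds at the weight through which $\cF_1$ was constructed, so it fails only on a proper Zariski-closed subset, and the remaining constraints rule out just finitely many integer weights, so such $k_1$ remain Zariski-dense in $U_1$. Pick any $P \in Z$ lying above $k_1$; its image $\tilde\kappa_2 \in \tU_2$ lies above $\kappa_2 \coloneqq k_1 - (1 + \tau + \tau')$, and since $w(\kappa_2) = k_1 - 1 - w(\tau) - w(\tau')$ exceeds the slope bound of $\cF_2$, the specialisation $f_2$ of $\cF_2$ at $\tilde\kappa_2$ is classical by Coleman's classicality theorem. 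Writing $k_2 \coloneqq w(\kappa_2)$ and $\sigma \coloneqq \kappa_1 - 1 - \tau = \kappa_2 + \tau'$ for the third coordinate of $P$, with $\sigma = j + \chi$, one has $j = k_1 - 1 - w(\tau)$ and $k_2 = k_1 - 1 - w(\tau) - w(\tau')$, so $1 \le k_2 \le j \le k_1 - 1$ and all the hypotheses of the Proposition of \S 4 on the specialisations of $\Pi^{\mathrm{oc}}$ are met at $(\kappa_1, \tilde\kappa_2)$ for the parameter $\tau$, and — by the symmetric computation, with $w(\tau')$ in place of $w(\tau)$ — also for $\tau'$. Hence, by the defining interpolation property of the two functions, $L_p^{\ss}(\cF_1, \cF_2; \tau)(P) = I(f_1, f_2, \kappa_1 - 1 - \tau)$ and $L_p^{\ds}(\cF_1, \cF_2; \tau')(P) = I(f_1, f_2, \kappa_2 + \tau')$; but $\kappa_1 - 1 - \tau = \kappa_2 + \tau' = \sigma$, so both sides equal $I(f_1, f_2, \sigma)$, and the functions agree at $P$. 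Letting $k_1$ range over the Zariski-dense, pole-avoiding set above and $P$ over its preimages in $Z$ gives the required density, so the lemma follows from the identity principle for meromorphic functions on the reduced rigid curve $Z$.

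The hard part is really just the bookkeeping in the second step: one must verify that a Zariski-dense family of classical points of $Z$ simultaneously satisfies the critical-range and non-vanishing-denominator hypotheses of the \S 4 Proposition, both for the $\ss$-construction with parameter $\tau$ and for the $\ds$-construction with parameter $\tau'$, and — the essential observation — that at each such point the two constructions interpolate $I(f_1, f_2, -)$ at one and the same character, namely $\sigma = \kappa_1 - 1 - \tau = \kappa_2 + \tau'$. This coincidence along $Z$ is exactly what the hypothesis $\{\kappa - (1+\tau+\tau') : \kappa \in U_1\} \subseteq U_2$ is there to guarantee, by keeping the slice $\kappa_2 = \kappa_1 - (1 + \tau + \tau')$ inside $U_2$. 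The other ingredients — reducedness of $Z$, Zariski-density of its classical points (and of integer weights in $U_1$), classicality of the large-weight specialisations of $\cF_1$ and $\cF_2$, and the identity principle on a reduced rigid curve — are all standard.
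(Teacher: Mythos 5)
Your argument is essentially correct in the setting where it is applied, but it takes a genuinely different and more roundabout route than the paper. The paper's proof is purely formal and $p$-adic: on the locus $\kappa_2 = \kappa_1 - (1+\tau+\tau')$ the two nearly-overconvergent forms $\cF_2\cdot\theta^{\tau}\bigl(E^{[p]}_{\bfk_1-\bfk_2-2\tau}\bigr)$ and $\cF_2\cdot\theta^{\tau'}\bigl(F^{[p]}_{\bfk_1-\bfk_2-2\tau'}\bigr)$ have \emph{identical} $q$-expansions (both equal $\cF_2\cdot\cE_N(\kappa_1,\kappa_2,\sigma)$ with $\sigma = \kappa_1-1-\tau = \kappa_2+\tau'$, and the normalising powers of $N$ also agree), so after applying $\Pi^{\mathrm{oc}}$ and $\lambda_{\cF_1^c}$ the two specialisations coincide at every locally algebraic $\kappa_1$ of weight $\ge 2\max(w(\tau),w(\tau'))+1$ — the only hypothesis needed is avoidance of the poles of the overconvergent projector. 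You instead route both values through the complex period $I(f_1,f_2,\sigma)$ via the interpolation formula, which forces you to impose classicality of $f_2$ (hence slope bounds and Coleman's theorem), nobility of $f_1$, the critical-range inequalities, and a restriction to integer weights $k_1$; all of this can be arranged and your density and identity-principle bookkeeping is sound, but it buys nothing here and costs generality. In particular, the lemma as stated lives in the \S 4 setting where $\cF_2$ is an arbitrary overconvergent family over a finite flat $\tU_2\to U_2$, not necessarily a normalised eigenform, so ``the specialisation of $\cF_2$ is classical by Coleman's theorem'' is not available in that generality and your proof would only cover the eigencurve case actually used later. Two smaller points: nobility of $f_1$ at large integer weights is better justified by the slope bound (slope $< (k_1-1)/2$ forces $\alpha_1\ne\beta_1$ and non-criticality) than by asserting it is a Zariski-open condition; and Zariski-density of integer weights in $U_1$ holds because $U_1$ is a disc around an integer weight, whereas the paper's use of all locally algebraic characters of large weight is robust for arbitrary affinoids in $\cW$.
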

  
  \begin{proof}
   The intersection $\cW^\ss(\tau) \cap \cW^\ds(\tau')$ consists of those points of the form $(\kappa_1, \tilde \kappa_2, \kappa_1 - 1-\tau)$ such that $\tilde\kappa_2$ lies above the point $\kappa_1 - (1 + \tau + \tau')$ of $\cW$. In particular, under the assumptions of the lemma, this is simply a finite covering of $U_1$.
   
   Let $(\kappa_1, \tilde\kappa_2, \sigma)$ be a point in this intersection with $\kappa_1$ locally algebraic, and such that $w(\kappa_1) \ge 2\max(w(\tau), w(\tau')) + 1$ in order to avoid singularities of the nearly-overconvergent projection operators. Then the two $p$-adic $L$-functions specialise to the image under $\lambda_{f_1^c}$ of the nearly-overconvergent modular forms with $q$-expansions
   \[ f_2 \theta^{\tau}\left( E^{[p]}_{\kappa_1 - \kappa_2 - 2\tau} \right) \quad\text{and}\quad
   f_2 \theta^{\tau'}\left( F^{[p]}_{\kappa_1 - \kappa_2 - 2\tau'} \right).\]
   Since these two modular forms are identical, we deduce that the two $L$-functions agree at the given point. As the set of locally-algebraic $\kappa_1 \in U_1$ with $w(\kappa_1)$ greater than any given bound is clearly Zariski-dense, it follows that the two $p$-adic $L$-functions are identically equal on this intersection.
  \end{proof}
  
  \begin{lemma}
   \label{lemma:intersect}
   Let $\tau$ be a locally algebraic character with $w(\tau) \ge 0$. If $U_2$ is sufficiently large (depending on $U_1$ and $\tau$), then the union of the intersections $\cW^{\ss}(t) \cap \cW^\ds(\tau)$, as $t$ varies over integers $\ge 0$, is Zariski dense in $\cW^\ds(\tau)$.
  \end{lemma}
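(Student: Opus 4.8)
The plan is to make the hypothesis ``$U_2$ sufficiently large'' explicit, to realise each intersection $\cW^{\ss}(t)\cap\cW^{\ds}(\tau)$ as a single fibre of an analytic map on $\cW^{\ds}(\tau)$, and then to finish by a soft dimension count.

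First I would pin down what ``sufficiently large'' should mean. Recall that $\cW$ is a finite disjoint union of open unit discs; that, in suitable coordinates on each component, the group law of $\cW$ is the multiplicative formal group law $X\oplus Y=X+Y+XY$, which satisfies $|X\oplus Y|\le\max(|X|,|Y|)$ and $|\ominus X|=|X|$ for $|X|,|Y|<1$; and that every integer weight $t\ge0$ has coordinate of absolute value $\le1/p$. Since $U_1$ is quasi-compact it lies in the locus $|x|\le\rho_1$ for some $\rho_1<1$. Combining these facts, the set $\bigcup_{t\ge0}\{\kappa_1-(1+\tau+t):\kappa_1\in U_1\}$ lies in the affinoid $\cW^{\circ}\subseteq\cW$ consisting of the closed disc of radius $\rho:=\max(\rho_1,\,1/p,\,|x(1+\tau)|)<1$ in each component of $\cW$. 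I then take the hypothesis on $U_2$ to be that $U_2\supseteq\cW^{\circ}$; thus for every integer $t\ge0$ we have $\{\kappa_1-(1+\tau+t):\kappa_1\in U_1\}\subseteq U_2$, and $\cW^{\circ}$ meets every component of $\cW$.

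Next, let $\nu\colon\cW^{\ds}(\tau)\to\cW$ be the analytic map $(\kappa_1,\tilde\kappa_2,\sigma)\mapsto\kappa_1-\kappa_2$, the weight of the Rankin--Selberg convolution. Unwinding the definitions, a point of $\cW^{\ds}(\tau)$ lies on $\cW^{\ss}(t)$ exactly when $\kappa_1-1-t=\kappa_2+\tau$, i.e.\ exactly when $\nu$ takes there the value $1+\tau+t$; thus $\cW^{\ss}(t)\cap\cW^{\ds}(\tau)=\nu^{-1}(1+\tau+t)$. Identifying $\cW^{\ds}(\tau)$ with $U_1\times\tU_2$, the map $\nu$ factors as the finite flat surjection $U_1\times\tU_2\to U_1\times U_2$ followed by $(\kappa_1,\kappa_2)\mapsto\kappa_1-\kappa_2$; hence, by the choice of $U_2$, the fibre $\nu^{-1}(1+\tau+t)$ is the preimage under that finite flat surjection of the graph $\{(\kappa_1,\,\kappa_1-(1+\tau+t)):\kappa_1\in U_1\}$, so it is non-empty and equidimensional of dimension $1$. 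Since the characters $1+\tau+t$, $t\ge0$, are pairwise distinct, these fibres are pairwise disjoint, and we obtain an infinite family of pairwise disjoint, non-empty, pure $1$-dimensional closed analytic subsets of $\cW^{\ds}(\tau)$.

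It remains to count dimensions. The affinoid $\cW^{\ds}(\tau)\cong U_1\times\tU_2$ is equidimensional of dimension $2$, and, using that $\cW^{\circ}$ meets every component of $\cW$, every irreducible component $V$ of $\cW^{\ds}(\tau)$ meets $\nu^{-1}(1+\tau+t)$ in a subset of dimension $1$ for infinitely many $t$. If the Zariski closure $Z$ of $\bigcup_{t\ge0}\nu^{-1}(1+\tau+t)$ were a proper subset of $\cW^{\ds}(\tau)$, then it would omit some such $V$, so $Z\cap V$ would be a proper Zariski-closed subset of the irreducible $2$-dimensional space $V$, hence of dimension $\le1$, hence a finite union of $1$-dimensional irreducible analytic sets; but $Z\cap V$ contains a $1$-dimensional piece of $\nu^{-1}(1+\tau+t)$ for infinitely many $t$, and these pieces, lying in disjoint fibres of $\nu$, are pairwise distinct irreducible analytic sets --- which is impossible. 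Hence $Z=\cW^{\ds}(\tau)$, which proves the lemma. The one genuinely substantive point is the choice of $U_2$: although $\kappa_1-(1+\tau+t)$ sweeps out every component of weight space as $t$ varies, it stays a bounded distance from the boundary of each, so that a single affinoid $U_2$ can absorb all the two-dimensional slices $\cW^{\ss}(t)$ simultaneously; granting this, the rest is routine rigid-analytic dimension theory.
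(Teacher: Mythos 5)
Your argument is correct, and it is a genuine filling-in of a step the paper dismisses with the two words ``Easy check'': the identification of $\cW^{\ss}(t)\cap\cW^{\ds}(\tau)$ with the fibre of $(\kappa_1,\tilde\kappa_2)\mapsto\kappa_1-\kappa_2$ over $1+\tau+t$, the ultrametric observation that integer weights stay a bounded distance from the boundary of each component of $\cW$ (so that a single affinoid $U_2$ absorbs all the translates $U_1-(1+\tau+t)$ at once), and the disjoint-fibres dimension count are exactly what is needed. One caveat: for a completely arbitrary affinoid $U_2\supseteq\cW^{\circ}$ the conclusion can fail, since $U_2$ (hence $\tU_2$, hence $U_1\times\tU_2$) may acquire superfluous irreducible components disjoint from every translate $U_1-(1+\tau+t)$, and such components would be missed by every intersection; so ``sufficiently large'' must be read as ``can be chosen suitably and as large as desired'' (e.g.\ $U_2=\cW^{\circ}$ itself, or any union of closed discs of radius $\ge\rho$, one in each component of $\cW$) rather than ``for every $U_2$ containing a fixed $U_2^0$''. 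With that reading --- which is the one needed for the corollary that uses this lemma --- your proof is complete.
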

  
  \begin{proof}
   Easy check.
  \end{proof}
 
 \section{The 3-variable geometric $L$-function}
  
  We now turn from ``$p$-adic analytic'' methods to ``arithmetic'' ones -- that is, we invoke the existence of the Euler system of Beilinson--Flach elements.
  
  \begin{theorem}
   Suppose $\widetilde U_2$ is the preimage of $U_2$ in the \emph{ordinary} locus of the eigencurve, and $\cF_2$ the universal ordinary family over $U_2$. Then there exists a $p$-adic meromorphic\footnote{It is analytic if the product of the prime-to-$p$ nebentypus characters of $\cF_1$ and $\cF_2$ is non-trivial. Otherwise, it may have poles along the near-central points $(\kappa_1, \kappa_2, \sigma)$ such that $\kappa_1 + \kappa_2 = 2\sigma$. This is a consequence of the `smoothing factors' $c^2 - c^?$ appearing in the construction of the Beilinson--Flach elements. In particular, the restriction of $L_p^{\mathrm{geom}}$ to any $\ds$ or $\ss$ slice is well-defined.} function $L_p^{\mathrm{geom}}(\cF_1, \cF_2)$ on $U_1 \times \tU_2 \times \cW$ with the following property:
   
   \begin{enumerate}
    \item[\textup{($\dagger$)}] For any crystalline character $\tau = t$ with $t \ge 0$, the 2-variable $p$-adic $L$-function $L_p^{\ss}(\cF_1, \cF_2; \tau)$ is the restriction of $L_p^{\mathrm{geom}}$ to $\cW^{\ss}(\tau)$.
   \end{enumerate}
   
   Moreover, $L_p^{\mathrm{geom}}$ is related to the Euler system of Beilinson--Flach elements via the formula
   \[ L_p^{\mathrm{geom}}(\cF_1, \cF_2) = \Big(c^2 - \varepsilon_{N, 1}(c)^{-1} \varepsilon_{N, 2}(c)^{-1} c^{2\mathbf{s}+2-\bfk_1 - \bfk_2}\Big)^{-1} (-1)^{\mathbf{s}} 
   \lambda(\cF_1)^{-1}\left\langle {}_c \mathcal{BF}^{[\cF_1, \cF_2]}, \eta_{\cF_1} \otimes \omega_{\cF_2} \right\rangle\]
   in the notation of \cite[\S 9.1]{loefflerzerbes16}, for any $c > 1$ coprime to $6pN_1 N_2$.
  \end{theorem}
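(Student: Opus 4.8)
The plan is to take the existence of $L_p^{\mathrm{geom}}(\cF_1,\cF_2)$, the Beilinson--Flach formula (the final display), the independence of the auxiliary integer $c$, and the description of the polar locus directly from \cite[Theorem 9.3.2]{loefflerzerbes16} (which is proved there for an arbitrary pair of Coleman families, so in particular applies when $\cF_2$ is ordinary), and then to upgrade the crystalline-point interpolation statement of that theorem to the slice-wise assertion ($\dagger$) by a Zariski-density argument. So I would first recall the ingredients of \cite[\S 9.1]{loefflerzerbes16}: the three-variable Beilinson--Flach class ${}_c\mathcal{BF}^{[\cF_1,\cF_2]}$ of \cite{KLZ17}, lying in an Iwasawa cohomology module over $U_1 \times \tU$ with $\cW$ as cyclotomic variable; the de Rham classes $\eta_{\cF_1}$ and $\omega_{\cF_2}$ and the period $\lambda(\cF_1)$; and the Perrin-Riou regulator pairing $\langle -,\ \eta_{\cF_1}\otimes\omega_{\cF_2}\rangle$, whose output is a meromorphic function on $U_1 \times \tU_2 \times \cW$. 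One then \emph{defines} $L_p^{\mathrm{geom}}$ by the displayed formula; independence of $c$ is the standard computation with the smoothing relations satisfied by ${}_c\mathcal{BF}$ as $c$ varies, the factor $c^2 - \varepsilon_{N,1}(c)^{-1}\varepsilon_{N,2}(c)^{-1} c^{2\mathbf{s}+2-\bfk_1-\bfk_2}$ being exactly its $c$-dependence; and since this factor is a unit away from the hypersurface $\kappa_1+\kappa_2=2\sigma$ (and a unit everywhere when $\varepsilon_{N,1}\varepsilon_{N,2}$ is non-trivial), the quotient is meromorphic with poles confined to that locus, so the restriction of $L_p^{\mathrm{geom}}$ to any $\ss$- or $\ds$-slice is well-defined.

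It then remains to prove ($\dagger$). Fix a crystalline character $\tau=t\ge 0$. By the explicit reciprocity law of \cite{KLZ17} (equivalently, the crystalline interpolation statement of \cite[Theorem 9.3.2]{loefflerzerbes16}), at every crystalline classical point $(k_1,\tilde\kappa_2,k_1-1-t)$ of $\cW^{\ss}(t)$ — meaning that $\cF_1$ specialises at the integer $k_1$ to a crystalline $p$-stabilised newform $f_1$, $\cF_2$ specialises at $\tilde\kappa_2$ (lying over an integer $k_2$) to a crystalline $p$-stabilised newform $f_2$, with $1\le k_2\le k_1-1-t$ and $k_1\notin\{2,\dots,2t\}$ — one has
\[ L_p^{\mathrm{geom}}(k_1,\tilde\kappa_2,k_1-1-t) = I(f_1,f_2,k_1-1-t), \]
the explicit shape of the implied factor being supplied by Proposition \ref{prop:interp-formula} (and by Proposition \ref{prop:interp-formula2} in the non-crystalline twist cases, which however do not occur here). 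On the other hand, by the construction of $L_p^{\ss}$ in \S 4 and the Proposition on specialisation of $\Pi^{\mathrm{oc}}(\Xi_\tau^{\ss})$, the function $L_p^{\ss}(\cF_1,\cF_2;t)$ takes the value $I(f_1,f_2,\kappa_1-1-t)$ at exactly these points. Hence $L_p^{\mathrm{geom}}|_{\cW^{\ss}(t)}$ and $L_p^{\ss}(\cF_1,\cF_2;t)$ agree on the set of all such crystalline classical points.

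Finally I would close by rigidity. The space $\cW^{\ss}(t)$ is isomorphic, via its first two coordinates, to $U_1\times\tU_2$; classical points are Zariski-dense in the Coleman family $\cF_1$ over $U_1$ and in the ordinary family $\cF_2$ over $\tU_2$, and among these the $p$-stabilisations of newforms of level prime to $p$, with $k_1$ arbitrarily large (so as to avoid $\{2,\dots,2t\}$) and $k_2$ ranging over the whole interval $[1,k_1-1-t]$, remain Zariski-dense in each connected component. The polar loci of the two functions lie in a proper analytic subset of $\cW^{\ss}(t)$ — the preimage of the weights $\{2,\dots,2t\}$ for $L_p^{\ss}$, and the hypersurface $\kappa_2=\kappa_1-2-2t$ for $L_p^{\mathrm{geom}}$ — which meets the crystalline locus in a proper subset of it. So the difference of the two functions is meromorphic and vanishes on a Zariski-dense subset of the complement of its polar set, hence is identically zero, which is ($\dagger$).

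The main obstacle is exactly this last density/rigidity step: one must be confident that, after removing the projector-singularity weights and the near-central hypersurface, the crystalline classical points in the required weight range still form a Zariski-dense subset of the two-dimensional slice — this is where the Zariski-density of classical points in Coleman families is indispensable — and one must check that \cite{loefflerzerbes16} normalises its crystalline interpolation by the same period integral $I(f_1,f_2,\cdot)$, so that no spurious Euler factor intervenes when comparing with the analytically-constructed $L_p^{\ss}$. Everything else is a matter of assembling cited results.
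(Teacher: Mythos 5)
Your reduction to \cite[Theorem 9.3.2]{loefflerzerbes16} is the right starting point, and your Zariski-density argument for upgrading crystalline-point interpolation to the slice-wise identity $(\dagger)$ is a reasonable fleshing-out of what that reference does internally. But there is a genuine gap at the very first step: you assert that the cited theorem ``is proved there for an arbitrary pair of Coleman families, so in particular applies when $\cF_2$ is ordinary'', and build everything on a direct citation. In fact the construction in \emph{op.\ cit.} is only carried out over \emph{small} affinoid neighbourhoods $U_1, U_2$ of two given eigenforms, with the freedom to shrink $U_2$ used in an essential way; whereas the theorem you are proving requires $L_p^{\mathrm{geom}}$ to exist over $U_1 \times \tU_2 \times \cW$ where $\tU_2$ is the \emph{full} preimage of an \emph{arbitrary} $U_2$ in the ordinary locus. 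This largeness of $U_2$ is not a cosmetic point: it is exactly what the subsequent results (Lemma \ref{lemma:intersect} and the corollary, both of which hypothesise ``$U_2$ sufficiently large'') consume, and it is the only place where the standing hypothesis that $\cF_2$ is ordinary actually enters. The paper's proof consists precisely of identifying the two places in \cite{loefflerzerbes16} where shrinking $U_2$ was used and explaining why ordinarity removes the need for it: (i) ensuring that all specialisations of $\cF_2$ at classical weights are classical, which is automatic for Hida families but not for general Coleman families; and (ii) producing a triangulation of the $(\varphi,\Gamma)$-module of $\cF_2$ together with canonical crystalline periods for the filtration steps globally over $\tU_2$, which in the ordinary case is supplied by Ohta's results \cite{ohta00} as in \cite{KLZ17}. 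Without (ii) in particular, the classes $\omega_{\cF_2}$ (and hence the pairing defining $L_p^{\mathrm{geom}}$) are only defined locally on the eigencurve with non-canonical choices, and your ``glue from local patches'' is not available for free.

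A secondary caution: your final rigidity step is sound in outline, but note that the paper does not in fact re-derive $(\dagger)$ by density from the pointwise crystalline interpolation formula; it observes that the identification of $L_p^{\mathrm{geom}}|_{\cW^{\ss}(t)}$ with the nearly-overconvergent construction $L_p^{\ss}$ is already how \cite[\S 9.3]{loefflerzerbes16} proceeds, so $(\dagger)$ is inherited directly rather than reproved. Your route would also work, provided you verify (as you rightly flag) that the Euler factors in the explicit reciprocity law of \cite{KLZ17} match those in Proposition \ref{prop:interp-formula}, but it is additional work the paper avoids. The essential missing content remains the globalisation over large $U_2$ described above.
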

  
  \begin{proof}
   This is essentially proved in \cite[\S 9.3]{loefflerzerbes16}. The only difference in our present statement is that we are allowing $U_2$ to be arbitrary, and permitting some finite flat covering $\tU_2 \to U_2$, whereas in our earlier work we assumed both $U_1$ and $U_2$ were small neighbourhoods of some given eigenforms $f_1, f_2$. However, the latitude to shrink $U_2$ was only used in \emph{op.cit.} at precisely two points: 
   \begin{itemize}
    \item in the proof of Proposition 5.3.4 of \emph{op.cit.}, in order to arrange that all specialisations of $\cF_2$ at points of classical weight were classical; this is automatically satisfied for ordinary families.
    \item in Sections 6.3 and 6.4 of \emph{op.cit.}, in order to find a triangulation of the $(\varphi, \Gamma)$-module associated to $\cF_2$, and canonical crystalline periods for the filtration steps; this can be carried out globally over an ordinary family, using Ohta's results \cite{ohta00}, as in \cite{KLZ17}.\qedhere
   \end{itemize}
  \end{proof}
  
  In order to complete the proof, we shall manoeuvre from the rather weak interpolating property $(\dagger)$ of $L_p^{\mathrm{geom}}$ into a much stronger one, by repeatedly using the compatibility between the $\ss$ and $\ds$ slices.
  
  \begin{corollary}
   Let $\tau$ be any locally-algebraic character (not necessarily crystalline) with $w(\tau) \ge 0$. If $U_2$ is sufficiently large (depending on $U_1$ and $\tau$) then
   \[ L_p^\ds(\cF_1, \cF_2; \tau) = L_p^{\mathrm{geom}}(\cF_1, \cF_2) |_{\cW^{\ds}(\tau)} \]
   and
   \[  L_p^\ss(\cF_1, \cF_2; \tau) = L_p^{\mathrm{geom}}(\cF_1, \cF_2) |_{\cW^{\ss}(\tau)}.\]
  \end{corollary}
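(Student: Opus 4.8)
The plan is to bootstrap from the weak interpolation property ($\dagger$) of $L_p^{\mathrm{geom}}$ to the two desired identities, using the compatibility of the $\ss$- and $\ds$-slices (the technical lemma above) together with the density statement of Lemma~\ref{lemma:intersect}; no further ``arithmetic'' input is needed.

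First I would arrange a single, sufficiently large $U_2$ that serves simultaneously for all the auxiliary integer parameters used below. For an integer $t\ge 0$, the subset $\{\kappa-(1+t+\tau):\kappa\in U_1\}$ of $\cW$ is the translate of the affinoid $U_1$ by the locally algebraic character $-(1+t+\tau)$; since $(1+p)^{-t}\in 1+p\Zp$ for every $t\ge 0$, all of these translates (over $t=0,1,2,\dots$) lie in one fixed affinoid subdomain $V\subseteq\cW$ depending only on $U_1$ and $\tau$, and the same remark applies with $\tau$ replaced by any integer $t'\ge0$. So I would fix $U_2$ large enough to contain these fixed affinoids and to satisfy the hypothesis of Lemma~\ref{lemma:intersect} for $\tau$ and for every integer $t'\ge0$. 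With $U_2$ so chosen, the technical lemma applies to the pair $\bigl(L_p^{\ss}(\cF_1,\cF_2;t),\,L_p^{\ds}(\cF_1,\cF_2;\tau)\bigr)$ for every $t\ge0$, and to $\bigl(L_p^{\ss}(\cF_1,\cF_2;\tau),\,L_p^{\ds}(\cF_1,\cF_2;t')\bigr)$ for every $t'\ge0$.

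Next I would prove the $\ds$-identity for \emph{every} locally algebraic $\tau$ with $w(\tau)\ge0$ (not only the given one). Fix an integer $t\ge0$. By ($\dagger$), $L_p^{\ss}(\cF_1,\cF_2;t)=L_p^{\mathrm{geom}}|_{\cW^{\ss}(t)}$; by the technical lemma, $L_p^{\ss}(\cF_1,\cF_2;t)$ and $L_p^{\ds}(\cF_1,\cF_2;\tau)$ agree on $\cW^{\ss}(t)\cap\cW^{\ds}(\tau)$; and the restrictions to this intersection of $L_p^{\mathrm{geom}}|_{\cW^{\ss}(t)}$ and of $L_p^{\mathrm{geom}}|_{\cW^{\ds}(\tau)}$ are both equal to $L_p^{\mathrm{geom}}|_{\cW^{\ss}(t)\cap\cW^{\ds}(\tau)}$. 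Hence $L_p^{\ds}(\cF_1,\cF_2;\tau)$ and $L_p^{\mathrm{geom}}|_{\cW^{\ds}(\tau)}$ agree on $\cW^{\ss}(t)\cap\cW^{\ds}(\tau)$ for every $t\ge0$, and by Lemma~\ref{lemma:intersect} the union of these intersections is Zariski dense in $\cW^{\ds}(\tau)$. Both sides are meromorphic on the reduced $2$-dimensional space $\cW^{\ds}(\tau)$ with poles confined to a fixed hypersurface (from the factors $\nabla_1-j$, respectively the near-central divisor of the footnote, along which $L_p^{\mathrm{geom}}$ restricted to the slice is in any case well-defined); so all but finitely many of the curves $\cW^{\ss}(t)\cap\cW^{\ds}(\tau)$ avoid the pole locus, and clearing a common denominator reduces the matter to the identity principle for rigid-analytic functions. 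This yields $L_p^{\ds}(\cF_1,\cF_2;\tau)=L_p^{\mathrm{geom}}|_{\cW^{\ds}(\tau)}$.

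Finally I would deduce the $\ss$-identity for the given $\tau$ from the $\ds$-identity at integer parameters: for each integer $t'\ge0$ the previous step gives $L_p^{\ds}(\cF_1,\cF_2;t')=L_p^{\mathrm{geom}}|_{\cW^{\ds}(t')}$, while the technical lemma gives that $L_p^{\ss}(\cF_1,\cF_2;\tau)$ and $L_p^{\ds}(\cF_1,\cF_2;t')$ agree on $\cW^{\ss}(\tau)\cap\cW^{\ds}(t')$; hence $L_p^{\ss}(\cF_1,\cF_2;\tau)$ and $L_p^{\mathrm{geom}}|_{\cW^{\ss}(\tau)}$ agree on $\cW^{\ss}(\tau)\cap\cW^{\ds}(t')$ for every $t'\ge0$. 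The union of these intersections is Zariski dense in $\cW^{\ss}(\tau)$ --- this is Lemma~\ref{lemma:intersect} with the roles of the $\ss$- and $\ds$-slices interchanged, proved by the same argument (it amounts to the observation that infinitely many distinct curves cannot lie in a proper Zariski-closed subset of the $2$-dimensional slice) --- and the identity principle as above gives $L_p^{\ss}(\cF_1,\cF_2;\tau)=L_p^{\mathrm{geom}}|_{\cW^{\ss}(\tau)}$. I expect the only non-formal points to be (i) that a single enlargement of $U_2$ handles all the infinitely many auxiliary integer parameters at once, which rests on the boundedness of $\{(1+p)^{\pm t}:t\ge0\}$ in $\Zp$ and hence of the corresponding translates of $U_1$ inside $\cW$; and (ii) the rigid-analytic identity principle for the meromorphic functions involved, which requires pinning down the pole divisors as fixed hypersurfaces so that, after clearing denominators, one compares honest analytic functions on a reduced space. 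Everything else is bookkeeping around restricting globally defined functions and quoting ($\dagger$), the compatibility lemma, and Lemma~\ref{lemma:intersect}.
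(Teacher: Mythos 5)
Your proposal is correct and follows essentially the same route as the paper: the first identity is obtained by combining property ($\dagger$) on the slices $\cW^{\ss}(t)$ for integers $t\ge 0$ with the compatibility lemma and the Zariski-density of Lemma~\ref{lemma:intersect}, and the second identity is then bootstrapped from the first via the intersections $\cW^{\ss}(\tau)\cap\cW^{\ds}(t')$. The only differences are cosmetic --- you restrict the auxiliary parameter in the second step to integers $t'$ where the paper allows arbitrary locally-algebraic $\tau'$, and you spell out the choice of a single large $U_2$ and the treatment of the pole divisors, details the paper leaves implicit.
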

  
  \begin{proof}
   By Lemma \ref{lemma:intersect}, for the first equality, it suffices to show that $L_p^\ds$ and $L_p^{\mathrm{geom}}$ agree on the intersection $\cW^{\ss}(t) \cap \cW^{\ds}(\tau)$, for integers $t \ge 0$. However, we know that $L_p^\ds$ and $L_p^\ss$ coincide on these intersections, and that $L_p^{\mathrm{geom}}$ in turn coincides with $L_p^{\ss}$.
   
   For the second equality, we consider the intersection of $\cW^{\ss}(\tau)$ with the slices $\cW^{\ds}(\tau')$, where $\tau'$ is an arbitrary locally-algebraic character of weight $w(\tau') \ge 0$. Using the previously-proved equality, we know that $L_p^{\mathrm{geom}}$ agrees with $L_p^\ss(\cF_1, \cF_2; \tau)$ on each of these intersections. As before, the union of these is Zariski dense in $\cW^{\ss}(\tau)$ as required.
  \end{proof}
  
  We conclude, finally, the following interpolation formula. Recall that we are assuming $\cF_2$ to be an ordinary family.
  
  \begin{theorem}
   Let $(\kappa_1, \tilde\kappa_2, \sigma)$ be a triple of locally-algebraic points in $U_1 \times \tU_2 \times \cW$, with $1 \le w(\kappa_2) \le w(\sigma) \le w(\kappa_1) - 1$. Let $f_1$, $f_2$ be the specialisations of $\cF_1, \cF_2$ at the weights $\kappa_i$, and suppose that these specialisations are classical.
   
   Then we have
   \[ L_p^{\mathrm{geom}}(\cF_1, \cF_2)(\kappa_1, \tilde\kappa_2, \sigma) = I(f_1, f_2, \sigma).\]
  \end{theorem}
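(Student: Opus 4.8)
The plan is to deduce the formula from the interpolation properties of the two two-variable ``analytic'' $L$-functions $L_p^{\ss}$ and $L_p^{\ds}$, combined with the Corollary above identifying the restrictions of $L_p^{\mathrm{geom}}$ to the slices $\cW^{\ss}(\tau)$ and $\cW^{\ds}(\tau)$. Write $k_i = w(\kappa_i)$ and $j = w(\sigma)$, so that $1 \le k_2 \le j \le k_1 - 1$. First I would attach to the given point the two natural slices through it, by setting $\tau^{\ss} = \kappa_1 - 1 - \sigma$ and $\tau^{\ds} = \sigma - \kappa_2$. Both are locally algebraic characters of non-negative weight, namely $t^{\ss} \coloneqq w(\tau^{\ss}) = k_1 - 1 - j \ge 0$ and $t^{\ds} \coloneqq w(\tau^{\ds}) = j - k_2 \ge 0$; and by construction $(\kappa_1, \tilde\kappa_2, \sigma)$ lies both on $\cW^{\ss}(\tau^{\ss})$ and on $\cW^{\ds}(\tau^{\ds})$.

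The one point requiring genuine care is that one must avoid the singularities of the nearly-overconvergent projection operator, i.e.\ the locus $w(\kappa_1) \in \{2, \dots, 2t\}$ on a $\cW^{\bullet}(\tau)$-slice with $t = w(\tau)$. Here I would observe that these singularities cannot be encountered on both slices at once: were $k_1 \in \{2, \dots, 2t^{\ss}\}$ and $k_1 \in \{2, \dots, 2t^{\ds}\}$, then $k_1 \le 2(k_1 - 1 - j)$ and $k_1 \le 2(j - k_2)$, and adding these would give $2k_1 \le 2(k_1 - 1 - k_2)$, i.e.\ $k_2 \le -1$, contradicting $k_2 \ge 1$. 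Thus at least one of the two slices --- say $\ds$, the case of $\ss$ being entirely parallel --- satisfies $k_1 \notin \{2, \dots, 2t^{\ds}\}$; moreover $k_1 \ge j + 1 \ge k_2 + 1 \ge 2$, so the remaining numerical hypotheses of the specialisation formula above hold automatically. I expect this dichotomy to be the crux of the argument: it is exactly the reason one must work with \emph{both} the $\ss$ and $\ds$ families, since neither alone reaches every critical triple while staying off the singular locus.

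On the chosen slice I would then apply the Proposition on specialisations for $\Pi^{\mathrm{oc}}(\Xi_{\tau}^{\ds})$: its hypothesis $k_2 \le k_1 - 1 - t^{\ds}$ is just $j \le k_1 - 1$, the non-singularity condition holds by the previous paragraph, and $f_2$ is classical by assumption. Hence $\Pi^{\mathrm{oc}}(\Xi_{\tau^{\ds}}^{\ds})(\kappa_1, \tilde\kappa_2)$ equals $\Pi^{\mathrm{hol}}\big(f_2 \cdot \cE_N(\kappa_1, \kappa_2, \kappa_2 + \tau^{\ds})\big) = \Pi^{\mathrm{hol}}\big(f_2 \cdot \cE_N(\kappa_1, \kappa_2, \sigma)\big)$. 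Feeding this through $\lambda_{\cF_1^c}$ (which specialises at $\kappa_1$ to $\lambda_{f_1^c}$, as $\cF_1^c$ specialises to $f_1^c$) and matching normalisations --- the character $\bfk_1 - \bfk_2 - 2\tau^{\ds} - 2$ specialises to $\kappa_1 + \kappa_2 - 2\sigma - 2$, which is precisely the exponent of $N$ occurring in the definition of $I(f_1, f_2, \sigma)$ --- one obtains $L_p^{\ds}(\cF_1, \cF_2; \tau^{\ds})(\kappa_1, \tilde\kappa_2) = I(f_1, f_2, \kappa_2 + \tau^{\ds}) = I(f_1, f_2, \sigma)$. This is exactly the ``interpolation by construction'' property of $L_p^{\ds}$ recorded above, now legitimate because the chosen slice avoids the poles of $L_p^{\ds}$.

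Finally I would invoke the Corollary. Because $\cF_2$ is an ordinary family, $\tU_2$ may be taken to be a finite flat covering of an entire component of weight space, so the hypothesis that $U_2$ be ``sufficiently large'' (depending on $U_1$ and the fixed $\tau^{\ds}$) is automatically satisfied; hence $L_p^{\ds}(\cF_1, \cF_2; \tau^{\ds})$ agrees with the restriction of $L_p^{\mathrm{geom}}(\cF_1, \cF_2)$ to $\cW^{\ds}(\tau^{\ds})$. Since $L_p^{\ds}(\cF_1, \cF_2; \tau^{\ds})$ is regular at $(\kappa_1, \tilde\kappa_2)$ --- the whole purpose of having selected the non-singular slice --- this equality of meromorphic functions may be evaluated at our point, giving $L_p^{\mathrm{geom}}(\cF_1, \cF_2)(\kappa_1, \tilde\kappa_2, \sigma) = I(f_1, f_2, \sigma)$, which is what we wanted.
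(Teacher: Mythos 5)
Your proof is correct and takes essentially the same route as the paper's: the same two slices $\tau = \kappa_1 - 1 - \sigma$ and $\tau' = \sigma - \kappa_2$, the same counting argument showing at least one of them avoids the singular locus $w(\kappa_1) \in \{2,\dots,2t\}$ of the overconvergent projector, and the same appeal to the corollary (using ordinarity of $\cF_2$ to take $U_2$ arbitrarily large) to identify the chosen slice $L$-function with the restriction of $L_p^{\mathrm{geom}}$. Your version is simply more explicit in verifying the numerical hypotheses and the matching of normalising factors.
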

  
  \begin{proof}
   Given any such triple, let us write $\tau = \kappa_1 - 1 - \sigma$ and $\tau' = \sigma - \kappa_2$. Both of these are locally algebraic characters, and $w(\tau), w(\tau') \ge 0$.
   
   Since $w(\tau) + w(\tau') = w(\kappa_1) -1 -w(\kappa_2)$, at least one of the quantities $w(\tau)$ and $w(\tau')$ must be $\le \tfrac{w(\kappa_1) - 1}{2}$. If $w(\tau) \le \tfrac{w(\kappa_1) - 1}{2}$, then  $(\kappa_1, \tilde\kappa_2, \sigma)$ lies in the interval in which $L_p^{\spadesuit}(\cF_1, \cF_2; \tau)$ interpolates the classical Rankin--Selberg period. Similarly, if $w(\tau')$ is smaller than this bound we may invoke the interpolating property of $L_p^{\ds}$.
   
   Since $\cF_2$ is an ordinary family, we may assume without loss of generality that $U_2$ is arbitrarily large, and via the previous theorem, we can conclude that $L_p^{\ss}$ or $L_p^{\ds}$ coincides with the appropriate specialisation of the 3-variable $p$-adic $L$-function. 
  \end{proof}
  
 \appendix
 
 \section{Evaluation of the Rankin--Selberg period}
 
  For the convenience of the reader, we outline the derivation of the formula relating the period $I(f_1, f_2, \sigma)$ defined above to the Rankin--Selberg $L$-function. Our approach is closely based on that of \cite{perrinriou88}. We place ourselves in the setting of Proposition \ref{prop:interp-formula}; and, since the case of trivial $\chi$ is covered in many references, we shall assume that $\chi$ is non-trivial, of conductor $p^r$ with $r \ge 1$.
  
  \subsubsection*{Step 1}
   
   We express the linear functional $\lambda_{f_1^c}$ on $S_k\left(\Gamma_1(N) \cap \Gamma_0(p^n)\right)$, for any $n \ge 1$, via the formula
   \[
    \lambda_{f_1^c}(h) = 
     \left( \frac{\varepsilon_1(p)}{\alpha_1}\right)^{n-1} \cdot
     \frac{ 
      \left \langle g_n
      , h \right\rangle_{N(p^{n})}
     }
     { \left\langle g, f_1^c \right\rangle_{N_1(p)}},
   \]
   where $g = W_{N_1 p}(f_{1,\beta})$ and $g_n = g \mid_k \left(\begin{smallmatrix} p^{n-1} \\ & 1 \end{smallmatrix}\right)$. Here $f_{1, \beta}$ is the $p$-stabilisation of $f_1^\circ$ corresponding to the root $\beta_1$ of the Hecke polynomial; and the subscript $N(p^n)$ denotes the Petersson product at level $\Gamma_1(N) \cap \Gamma_0(Np^n)$. Cf.~\cite[Proposition 4.5]{hida85}. A computation closely analogous to the final step of \cite[Proposition 10.1.1]{KLZ17} shows that the denominator term is given by
   \[ 
    \left\langle g, f_1^c \right\rangle_{N_1(p)} = \frac{\overline{\lambda(f_1^\circ)} \alpha \cE(f_1)  \cE^*(f_1)}{\varepsilon_1(p)}\cdot \langle f_1^\circ, f_1^\circ \rangle_{N_1},
   \]
   where $\lambda(f_1^\circ)$ denotes the Atkin--Lehner pseudo-eigenvalue of $f_1^\circ$. This yields the formula
   \[ I(f_1, f_2, j + \chi) = 
   \frac{\varepsilon_1(p)^{2r}}{\alpha_1^{2r}  \overline{\lambda(f_1^\circ)}  \cE(f_1)  \cE^*(f_1) \langle f_1^\circ, f_1^\circ \rangle_{N_1}} \left\langle g_n, f_2 \cdot \cE(k_1, k_2, j + \chi) \right\rangle_{N(p^{2r})}.\]
   
  \subsubsection*{Step 2}
   
   We recognise the nearly-holomorphic Eisenstein series $\cE(k_1, k_2, j + \chi)$ of level $Np^{2r}$ as the twist by the character $\chi$ of a simpler Eisenstein series $\tilde E$ of level $Np^r$ and character $\chi^{-2}$, whose $q$-expansion is
   \[ 
    \sum_{n \ge 1} q^n \sum_{\substack{d \mid n \\ p \nmid \tfrac{n}{d}}} d^{j-k_2} (n/d)^{k_1 - 1 - j} \chi(n/d)^{-2} \left(e^{2\pi i d / N} + (-1)^{k_1 - k_2} e^{-2\pi i d / N}\right).
   \]
   Since $a_n(g_{2r}) = 0$ unless $p^{2r-1} \mid n$, we can pull the twist through the Petersson product to write
   \[ 
    \left\langle g_{2r}, f_2 \cdot \cE(k_1, k_2, j + \chi) \right\rangle_{N(p^{2r})} = \chi(-1) 
    \left \langle g_{2r}, f_{2, \chi} \cdot \tilde E \right\rangle_{N(p^{2r})}.
   \]
  
  \subsubsection*{Step 3}
  
   We re-write the last Petersson product using the local Atkin--Lehner operator $W_{p^{2r}}$ acting on forms of level $Np^{2r}$. We compute that
   \[ 
    \tilde E \mid W_{p^{2r}} = p^{2r(k_1-2-j)}\chi(-1) \sum_{a \in (\ZZ/p^{2r}\ZZ)^\times} \chi(a)^{-2} E_{1/N + a/p^{2r}}
   \]
   where the nearly-holomorphic Eisenstein series $E_\gamma = E_{\gamma}^{k_1 - k_2}(-, j-k_1 + 1)$ for $\gamma \in \QQ/\ZZ$ is as in \cite[\S 4--5]{leiloefflerzerbes14}. On the other hand, the action on $f_{2, \chi}$ is given by
   \[ f_{2, \chi} \mid W_{p^{2r}} = p^{(k_2-3)r} \varepsilon_2(p)^r G(\chi)^2 f_{2, \chi^{-1}}. \]
   Combining these formulae we deduce
   \[ 
    \left\langle g_{2r}, f_2 \cdot \cE(k_1, k_2, j + \chi) \right\rangle_{N(p^{2r})}
    = 
    \left( \frac{p^{(2k_1 + k_2 -5 -2j)r}  G(\chi)^2 \chi(N^2)}{\varepsilon_1(p)^{2r} \varepsilon_2(p)^{r}}\right) \left\langle f_{1, \beta} \mid_{k_1} W_{N_1}, f_{2, \chi^{-1}} \cdot  E_{1/Np^{2r}}\right\rangle_{Np^{2r}}.
   \]
   
   \subsubsection*{Step 4} 
   
    Via the classical ``unfolding'' technique, integrating against the Eisenstein series $E_{1/Np^{2r}}$ gives the (imprimitive) Rankin--Selberg $L$-function at $s = j$; cf.~\cite[Theorem 7.1]{kato04}. That is, we have
    \[ 
     \left\langle f_{1, \beta} \mid_{k_1} W_{N_1}, f_{2, \chi^{-1}} \cdot  E_{1/Np^{2r}}\right\rangle_{Np^{2r}}
     = 
     \frac{(j-1)! (j-k_2)! i^{k_1-k_2} L^{\mathrm{imp}}\left(\overline{f_{1, \beta} \mid_k W_{N_1}}, f_{2, \chi^{-1}}, j\right)}
     {N^{k_1 + k_2 -2j - 2} p^{2r(k_1 + k_2-2j-2)} \pi^{2j+1-k_2} 2^{2j + k_1 - k_2}}
     .
    \]
    However, since all Fourier coefficients $a_n$ of $f_{2, \chi^{-1}}$ with $p \mid n$ are zero, this formula is unchanged if we replace $\overline{f_{1, \beta} \mid_k W_{N_1}}$ with any form having the same Fourier coefficients away from $p$; one such form is $\overline{\lambda(f_1^\circ)} f_1^\circ$, so this is
    \[ 
     \left\langle f_{1, \beta} \mid_{k_1} W_{N_1}, f_{2, \chi^{-1}} \cdot  E_{1/Np^{2r}}\right\rangle_{Np^{2r}}
     = 
     \frac{(j-1)! (j-k_2)! i^{k_1-k_2} \overline{\lambda(f_1^\circ)}\cdot L^{\mathrm{imp}}\left(f_1^\circ, f_2^\circ, \chi^{-1}, j\right)}
     {N^{k_1 + k_2 -2j - 2} p^{2r(k_1 + k_2-2j-2)} \pi^{2j+1-k_2} 2^{2j + k_1 - k_2}}.
    \]
    Combining steps 1, 3 and 4 gives the formula stated in Proposition \ref{prop:interp-formula}. A similar argument (using an Eisenstein series of level $Np^{r + r'}$) can be used to prove Proposition \ref{prop:interp-formula2}.

\providecommand{\bysame}{\leavevmode\hbox to3em{\hrulefill}\thinspace}
\renewcommand{\MR}[1]{%
 MR \href{http://www.ams.org/mathscinet-getitem?mr=#1}{#1}.
}
\newcommand{\articlehref}[2]{\href{#1}{#2}}

\end{document}